\newtheorem{definition}{Definition}
\newtheorem{theorem}{Theorem}
\newtheorem{remark}{Remark}
\newtheorem{lemma}{Lemma}
\def\geqslant{\ge}
\def\leqslant{\le}
\def\bq{\begin{eqnarray}}
\def\eq{\end{eqnarray}}
\def\bqq{\begin{eqnarray*}}
\def\eqq{\end{eqnarray*}}
\def\nn{\nonumber}
\def\eps{\varepsilon}
\newcommand{\h}{\hspace*{.24in}}
\def\R {\mathbb{R}}
\def\C {\mathbb{C}}
\def\N {\mathbb{N}}
\def\F {\mathcal{F}}
\def\eps {\varepsilon}
\def\div{\operatorname{div}}
\title{\bf The body force in a three-dimensional Lam\'e system: identification and regularization}
\author{Dang Duc Trong, Phan Thanh Nam and Phung Trong Thuc}
\begin{document}
\date{September 16, 2011}
\maketitle

\begin{abstract} Let a three-dimensional isotropic elastic body be described by the Lam\'e system with the body force of the form $F(x,t)=\varphi(t)f(x)$, where $\varphi$ is known. We consider the problem of determining the unknown spatial term $f(x)$ of the body force when the surface stress history is given as the overdetermination. This inverse  problem is ill-posed. Using the interpolation method and truncated Fourier series, we construct a regularized solution from approximate data and provide explicit error estimates. 
\vspace{5pt}

AMS 2010 Subject Classification: 35L20, 35R30.

Keywords: Body force, elastic, ill-posed problem, interpolation, Fourier series. 
\end{abstract} 


\section{Introduction}
Let $\Omega=(0,1)\times(0,1)\times (0,1)$ represent a three-dimensional isotropic elastic body and let $T>0$ be the length of the observation time. For each $x:=(x_1,x_2,x_3)\in \Omega$, we denote by $u(x,t)=(u_1(x,t),u_2(x,t),u_3(x,t))$ the displacement, where $u_j$ is the displacement in the $x_j$-direction. 

As known, $u$ satisfies the Lam\'e system (see, e.g., \cite{MHS,TG}) 
\bq
\frac{{\partial^2 u}}
{{\partial t^2 }}- \mu \Delta u -\left( {\lambda  +\mu } \right)\nabla \left( {\div (u)} \right) = F,~~x\in \Omega, t\in (0,T),\label{1}
\eq
where $F(x,t)=(F_1(x,t),F_2(x,t),F_3(x,t))$ is the body force and $\div(u)=\nabla \cdot u =\partial u_1/\partial x_1+\partial u_2/\partial x_2+\partial u_3/\partial x_3$. The Lam\'e constants $\lambda$ and $\mu$ satisfy $\mu>0$ and $\lambda+2\mu>0$. The system (\ref{1}) is associated with the initial condition 
\bq
\left\{ \begin{gathered}
  (u_1(x,0),u_2(x,0),u_3(x,0))=(g_{1}(x),g_{2}(x),g_{3}(x)),~x\in \Omega,\hfill \\
   \left( {\frac{\partial u_1}{\partial t}(x,0),\frac{\partial u_2}{\partial t}(x,0),\frac{\partial u_3}{\partial t}(x,0)}\right)
=(h_{1}(x),h_{2}(x),h_{3}(x)),~x\in \Omega,\hfill \\
 \end{gathered}  \right.\label{3}
 \eq
 and the Dirichlet boundary condition 
\bq (u_1(x,t),u_2(x,t),u_3(x,t))=(0,0,0),\h x\in \partial \Omega, t\in (0,T),\label{2}\eq
namely the boundary of the elastic body is clamped. 

The direct problem is to determine $u$ from $u(0,x), u_t(0,x)$ and $F$. We are, however, interested in the inverse problem of determining both of $(u,F)$. Of course, to ensure the uniqueness of the solution we shall require some additional information (the {\it overdetermination}). Similarly to \cite{MMM,DPPT}, we shall assume that the surface stress is given on the boundary of the body, i.e.,
\bq
\left( \begin{gathered}
  \sigma _1 \h \tau _{{\text{12}}} \h\tau _{{\text{13}}}  \hfill \\
  \tau _{{\text{21}}}\h\sigma _{\text{2}} \h\tau _{{\text{23}}}  \hfill \\
  \tau _{{\text{31}}} \h\tau _{{\text{32}}} \h\sigma _{\text{3}}  \hfill \\ 
\end{gathered} \right)\left(\begin{gathered}
  \text{\bf n}_1  \hfill \\
 \text{\bf n}_2  \hfill \\
 \text{\bf n}_3  \hfill \\ 
\end{gathered}  \right) = \left( \begin{gathered}
  X_1  \hfill \\
  X_2  \hfill \\
  X_3  \hfill \\ 
\end{gathered}  \right)~,~ x\in \partial\Omega,t\in (0,T),\label{4}
\eq
where  $ \text{\bf n}=( \text{\bf n}_1, \text{\bf n}_2, \text{\bf n}_3)$ is the outward unit normal vector of $\partial \Omega$ and the stresses $\sigma$ and $\tau$ are defined by
$$
  \tau_{jk}  = \mu \left( {\frac{{\partial u_j }}
{{\partial x_k }} + \frac{{\partial u_k }}
{{\partial x_j }}} \right), ~ \sigma _j  = \lambda \div(u)+ 2\mu \frac{{\partial u_j }}
{{\partial x_j }},\h j,k\in\{1,2,3\}.
$$

In 2005, Grasselli, Ikehata and Yamamoto \cite{MMM} showed that the body force of the form
 $F(x,t)=\varphi(t)f(x)$ is uniquely determined from (\ref{1}-\ref{4}) provided that
 $\varphi\in C^1([0,T])$ is given such that $\varphi(0)\ne 0$ and the time of observation
 $T>0$ is large enough. In spite of the uniqueness, the problem of determining the spatial term $f$ is still ill-posed,
 i.e. a small error of data may cause a large error of solutions.
 Therefore, it is important in practice to find a regularization process, namely to construct an approximate solution using approximate data. 

Recently, the regularization problem was solved partially in \cite{DPPT}, where a regularized solution for the time-independent term
 $f$ is produced using further information on the final condition $u(x,T)$.
 The final condition plays an essential role in \cite{DPPT} since it enables the authors to find an explicit formula for the Fourier transform of $f$, and then use this information to recover $f$.
 
It was left as an open problem in \cite{DPPT} (see their Conclusion) to
 find a regularization process without using this technical condition. The aim of the present paper is to solve this problem completely, i.e. to find a regularization process of $f$ using only the data in (\ref{1}-\ref{4}). We follow the interpolation method introduced in \cite{TDN} where the authors constructed a regularized solution for the heat source of a heat equation. More precisely, lacking the final condition, we are only able to find an approximation for the Fourier transform $\widehat f(\xi)$ with $|\xi|$ large. The idea is that because  $\widehat f(\xi)$ is an analytic function (since $f$ has compact support), we can use some interpolation process to recover $\widehat f(\xi)$ with $|\xi|$ small. 

The paper is organized as follows. In Section 2, we shall set some  notations and state our main results. Then we prove the uniqueness in Section 3 and the regularization in Section 4. Finally, in Section 5 we test our regularization process on an explicit numerical example.  

\section{Main results}

Recall that our aim is to recover the spatial term
$$
f(x)=(f_1(x),f_2(x),f_3(x)), x\in \Omega,
$$
of the body force $F(x,t)=\varphi(t)f(x)$ from the system (\ref{1}-\ref{4}). The Lam\'e constants always satisfy $\mu>0$ and $\lambda+2\mu>0$, and the data $I = (\varphi ,X,g,h)$ is allowed to be non-smooth,  
\[
I \in \left( {L^1(0,T),(L^1(0,T,L^1(\partial \Omega)))^3, (L^2(\Omega))^3,(L^2(\Omega))^3} \right).
\]

For $\xi  = (\xi _1 ,\xi _2 ,\xi _3 ),\zeta  = (\zeta _1 ,\zeta _2 ,\zeta _3 )\in \C^3$, we set $\xi \cdot\zeta=\xi _1 \zeta _1  + \xi _2 \zeta _2  + \xi _3 \zeta _3 $, $\left| \xi  \right| = \sqrt {\overline{\xi}\cdot\xi }$ and $\left| \xi  \right|_0 = \sqrt {|\xi \cdot\xi| }$. For $\alpha \in\C^3$ and $x\in \R^3$, denote 
\bqq
  G(\alpha, x)&=&G_1^1\left( \alpha ,x \right) =G_2^2\left( \alpha ,x \right)=G_3^3\left( \alpha , x \right)\hfill\\
&=&\cos \left( {{\alpha _1}{x_1}} \right)\cos \left( {{\alpha _2}{x_2}} \right)\cos \left( {{\alpha _3}{x_3}} \right), \hfill \\
  G_2^1\left( \alpha ,x \right) &=& G_1^2\left( \alpha , x \right)= - \sin \left( {{\alpha _1}{x_1}} \right)\sin \left( {{\alpha _2}{x_2}} \right)\cos \left( {{\alpha _3}{x_3}} \right), \hfill \\
  G_3^1\left( \alpha ,x \right) &=& G_1^3\left( \alpha , x \right)=  - \sin \left( {{\alpha _1}{x_1}} \right)\cos \left( {{\alpha _2}{x_2}} \right)\sin \left( {{\alpha _3}{x_3}} \right), \hfill \\
    G_3^2\left( \alpha ,x \right) &=& G_2^3\left( \alpha ,x \right)= - \cos \left( {{\alpha _1}{x_1}} \right)\sin \left( {{\alpha _2}{x_2}} \right)\sin \left( {{\alpha _3}{x_3}} \right).
\eqq
Sometimes we shall write ${G_k^j}$ instead of ${G_k^j}( \alpha,x)$ if there is no confusion.

We start with the following lemma.

\begin{lemma}\label{bod1} If $u\in (C^2([0,T];L^2(\Omega))\cap L^2(0,T;H^2(\Omega)))^3$ and $f\in (L^2(\Omega))^3$ satisfy the system (\ref{1}-\ref{4}) with data $I = (\varphi ,X,g,h)$, then 
$$
\int\limits_\Omega  {{f_j}} {G}dx = \frac{{{E_{1j}}(I)\left( \alpha  \right) + {E^*_{1j}}(\alpha )}}{{{D_1}(I)\left( \alpha  \right)}} + \frac{{{E_{2j}}(I)\left( \alpha  \right) + {E^*_{2j}}(\alpha )}}{{{D_2}(I)\left( \alpha  \right)}},~j \in \{1,2,3\}
$$
for all $\alpha=(\alpha_1,\alpha_2,\alpha_3)\in \C^3$ such that
$$ |\alpha|_0^2=-(\alpha_1^2+\alpha_2^2+\alpha_3^2) \ge 0~~\text{and}~{D_1}\left( I \right)\left( \alpha  \right)\ne 0,{D_2}\left( I \right)\left( \alpha  \right) \ne 0,$$
where
\bqq 
  {D_1}\left( I \right)\left( \alpha  \right) &=& -|\alpha|_0^2\int\limits_0^T {\varphi \left( t \right)\frac{{\sinh \left( {\sqrt {  \left( {\lambda  + 2\mu } \right)} \,{{\left| \alpha  \right|}_0}\left( {T - t} \right)} \right)}}
{{\cosh \left( {\sqrt {  \left( {\lambda  + 2\mu } \right)}\, {{\left| \alpha  \right|}_0}T} \right)}}} dt, \hfill \\[0.5cm]
{D_2}\left( I \right)\left( \alpha  \right) &=& -|\alpha|_0^2 \int\limits_0^T {\varphi \left( t \right)\frac{{\sinh \Bigl( {\sqrt {  \mu }\,{{\left| \alpha  \right|}_0}\left( {T - t} \right)} \Bigr)}}
{{\cosh \Bigl( {\sqrt {  \mu }\,{{\left| \alpha  \right|}_0}T} \Bigr)}}} dt, \hfill \\
{E_{1j}}(I)(\alpha ) &=&  - \sqrt {\lambda  + 2\mu }\,{\left| \alpha  \right|_0}{\alpha _j}\int\limits_\Omega {\left( {{\alpha _1}{g_1}{G_1^j} + {\alpha _2}{g_2}{G_2^j} + {\alpha _3}{g_3}{G_3^j}} \right)} dx  \hfill \\ 
&~&-\tanh \left({\sqrt {\lambda + 2\mu } \,{\left| \alpha  \right|_0}T} \right){\alpha _j}\int\limits_\Omega  {\left( {{\alpha _1}{h_1}{G_1^j} + {\alpha _2}{h_2}{G_2^j} + {\alpha _3}{h_3}{G_3^j}} \right)} dx  \hfill \\ 
&~&- \int\limits_0^T {\int\limits_{\partial \Omega } {\dfrac{{\sinh \Bigl( {\sqrt {\lambda + 2\mu }\,{\left| \alpha  \right|_0}\left( {T - t} \right)} \Bigr)}}{{\cosh \Bigl( {\sqrt {\lambda  + 2\mu }\,{ \left| \alpha  \right|_0}T} \Bigr)}}}} {\alpha _j}\left( {{\alpha _1}{X_1}{G_1^j} +{\alpha _2}{X_2}{G_2^j} + {\alpha _3}{X_3}{G_3^j}} \right) d\omega dt,
\eqq
\bqq
E_{1j}^{*}\left(\alpha\right) &=& \frac{\sqrt{\lambda +2\mu}\,{\left|\alpha\right|_0}}{\cosh\left(\sqrt{\lambda +2\mu}\,{\left|\alpha\right|_0}T\right)}\alpha_{j}\times \hfill \\ 
&~&\times\intop_{\Omega}\Bigl(\alpha_{1}u_{1}\left(x,T\right)G_{1}^j+\alpha_{2}u_{2}\left(x,T\right)G_{2}^j+\alpha_{3}u_{3}\left(x,T\right)G_{3}^j\Bigr)\, dx,\hfill\\
E_{2j}\left(I\right)\left(\alpha\right) &=& -\sqrt{\mu}\,{\left|\alpha\right|_0}\intop_{\Omega}\Bigl( -\left| \alpha\right|_0^2g_{j}G_{j}^j - \alpha_{j}\bigl(\alpha_{1}g_{1}G_{1}^j+\alpha_{2}g_{2}G_{2}^j+\alpha_{3}g_{3}G_{3}^j\bigr)\Bigr) \,dx \hfill\\
&~&-\tanh\left(\sqrt{\mu}\,{\left|\alpha\right|_0}T\right)\intop_{\Omega}\Bigl( - \left| \alpha  \right|_0^2h_{j}G_{j}^j- \alpha_{j}\bigl(\alpha_{1}h_{1}G_{1}^j+\alpha_{2}h_{2}G_{2}^j+\alpha_{3}h_{3}G_{3}^j\bigr)\Bigr)\, dx \hfill\\
&~&+\intop_{0}^{T}\intop_{\partial\Omega}\frac{\sinh\bigl(\sqrt{\mu}\,{\left|\alpha\right|_0}\left(T-t\right)\bigr)}{\cosh\bigl(\sqrt{\mu}\,{\left|\alpha\right|_0}T\bigr)}\times\hfill \\
&~&~~~~\times\Bigl(  \left| \alpha  \right|_0^2X_{j}G_{j}^j+\alpha_{j}\bigl(\alpha_{1}X_{1}G_{1}^j+\alpha_{2}X_{2}G_{2}^j+\alpha_{3}X_{3}G_{3}^j\bigr)\Bigr)\, d\omega dt,\hfill\\
E_{2j}^{*}\left(\alpha\right) &=& \frac{\sqrt{\mu}\,{\left|\alpha\right|}_0}{\cosh\left(\sqrt{\mu}\,{\left|\alpha\right|_0}T\right)}\intop_{\Omega}\biggl( - \left| \alpha  \right|_0^2u_{j}\left(x,T\right)G_{j}^j + \hfill\\
&~&-\, \alpha_{j}\Bigl(\alpha_{1}u_{1}\left(x,T\right)G_{1}^j+\alpha_{2}u_{2}\left(x,T\right)G_{2}^j+\alpha_{3}u_{3}\left(x,T\right)G_{3}^{j}\Bigr)\biggl)\,dx.
\eqq
\end{lemma}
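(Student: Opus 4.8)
The plan is to derive the identity by testing the Lamé system against a family of explicit separated solutions of an associated adjoint (backward) problem. First I would look for "test functions'' of the form $v(x,t)=G_k^j(\alpha,x)\,\psi(t)$, where the spatial factors $G_k^j$ are exactly the trigonometric products introduced above; the crucial feature of this family is that each $G_k^j$ satisfies the clamped boundary condition on $\partial\Omega$ only in the appropriate component, and that the pair $(\Delta,\ \nabla\div)$ acts on the vector $\big(G_1^j,G_2^j,G_3^j\big)$ in a way that closes: one gets $-|\alpha|_0^2$ times a linear combination plus an $\alpha_j(\alpha\cdot\,\cdot)$ term, which is why two scalar modes appear, one governed by the wave speed $\sqrt{\lambda+2\mu}$ (the compressional/divergence part) and one by $\sqrt{\mu}$ (the shear part). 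This explains the split into the $D_1,E_{1j}$ block and the $D_2,E_{2j}$ block.

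Next I would fix the ordinary differential equation in $t$ for the temporal factor. Because we have $|\alpha|_0^2=-(\alpha_1^2+\alpha_2^2+\alpha_3^2)\ge0$, plugging $G$ into the spatial operator produces a positive coefficient, so the relevant ODE is $\psi''-c^2|\alpha|_0^2\psi=(\text{source})$ with $c^2\in\{\lambda+2\mu,\mu\}$; I choose the solution of the homogeneous adjoint equation with the terminal normalization $\psi(T)$ fixed and $\psi'(T)$ (or a Robin-type combination) chosen so that the boundary terms at $t=T$ collapse to the stated $E^*_{1j},E^*_{2j}$ expressions involving $u(x,T)$. This is precisely where $\cosh$ and $\sinh$ of $\sqrt{c}\,|\alpha|_0(T-t)$ enter, and where the $\tanh(\sqrt{c}\,|\alpha|_0 T)$ prefactors on the $h$-integrals come from (the initial-velocity contribution evaluated through the Green's function of the ODE). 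The boundary stress data $X$ enters through integration by parts in $x$: the symmetric stress tensor paired with $n$ against $v$ on $\partial\Omega$ reproduces exactly the $\int_0^T\int_{\partial\Omega}$ terms, using the definitions of $\tau_{jk}$ and $\sigma_j$.

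Then I would carry out the double integration by parts: multiply \eqref{1} (componentwise, the $j$-th equation paired with $G_k^j$ and summed appropriately in $k$), integrate over $\Omega\times(0,T)$, move the $t$-derivatives onto $\psi$ picking up the Cauchy data \eqref{3} at $t=0$ and the terminal data at $t=T$, and move the spatial operator onto $G$ using Green's identity, the clamped condition \eqref{2}, and the surface-stress overdetermination \eqref{4}. On the left one is left with $\int_0^T\varphi(t)\psi(t)\,dt$ times $\int_\Omega f_j G\,dx$ (after the modal algebra), and the factor $\int_0^T\varphi\,\psi\,dt$ is exactly $D_1$ or $D_2$; dividing by it — legitimate under the hypothesis $D_1,D_2\ne0$ — gives the claimed formula. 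The low regularity of $I$ only requires that every integral be read as the natural $L^1$/$L^2$ pairing, which is fine since $G_k^j$ and the $\cosh/\sinh$ weights are smooth and bounded on $\overline\Omega\times[0,T]$.

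The main obstacle I expect is bookkeeping rather than any deep difficulty: getting the modal algebra for $(\mu\Delta+(\lambda+\mu)\nabla\div)$ acting on the vector $(G_1^j,G_2^j,G_3^j)$ exactly right, so that it cleanly decouples into the $|\alpha|_0^2$-eigenmode with speed $\sqrt{\lambda+2\mu}$ and the one with speed $\sqrt{\mu}$, and then tracking every boundary contribution (spatial boundary $\partial\Omega$ via the stress tensor, temporal boundaries $t=0,T$) without sign or coefficient errors — in particular matching the asymmetry between the $g$-terms (carrying $\sqrt{c}\,|\alpha|_0$ or a bare $\sqrt{c}\,|\alpha|_0$) and the $h$-terms (carrying $\tanh(\sqrt{c}\,|\alpha|_0 T)$), which is dictated by which Green's-function branch of the temporal ODE multiplies each piece of data. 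Once the test-function family and its terminal normalization are pinned down, the rest is a (lengthy but routine) verification.
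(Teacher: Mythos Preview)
Your proposal is correct and follows essentially the same route as the paper: the paper first integrates the $k$-th equation of the system against $G_k^j$ over $\Omega$ (your ``spatial'' integration by parts, producing the $X$-boundary terms), takes the linear combinations $\sum_k \alpha_j\alpha_k(\cdot)$ and $|\alpha|_0^2(\cdot)_{k=j}+\sum_k\alpha_j\alpha_k(\cdot)$ to decouple into two scalar ODEs $y''-\eta^2 y=h$ with $\eta\in\{\sqrt{\lambda+2\mu}\,|\alpha|_0,\sqrt{\mu}\,|\alpha|_0\}$, and then pairs each against $\psi(t)=\sinh(\eta(T-t))/\cosh(\eta T)$ over $(0,T)$ --- exactly your temporal test function satisfying $\psi''-\eta^2\psi=0$ with $\psi(T)=0$. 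Your adjoint/test-function framing is a conceptual repackaging of the same computation, and your assessment that the difficulty is purely bookkeeping is accurate.
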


Note that $E^*_{1j}$ and $E^*_{2j}$ in  Lemma \ref{bod1} depend on $u(x,T)$ instead of the data $I = (\varphi ,X,g,h)$. Therefore, in general these terms are unknown. However, our observation is that with $|\alpha|$ large, $E^*_{1j}$ and $E^*_{2j}$ are relatively small in comparison with $E_{1j}$ and $E_{2j}$, and can be relaxed when computing the integrals $\int\limits_\Omega  {f_jGdx}$ which are the Fourier coefficients of $f_j$'s. So we introduce some convenient notations.

\begin{definition}[Information from data] For $I = (\varphi ,X,g,h)$ and $\alpha\in \C^3$ such that $\alpha\cdot \alpha<0$, denote (using notations of Lemma \ref{bod1}).
\[
H_j(I)(\alpha ) = \left\{ \begin{gathered}
  \frac{{E_{1j} (I)}}
{{D_1 (I)}} + \frac{{E_{2j} (I)}}
{{D_2 (I)}},\h\text{ if } D_1(I)(\alpha).D_2(I)(\alpha)\ne 0, \hfill \\
  0,\h\h\h\h\h~~\text{ if } D_1(I)(\alpha).D_2(I)(\alpha)= 0. \hfill \\ 
\end{gathered}  \right.
\]
\end{definition}

\begin{definition}[Fourier coefficients] For  $\alpha  = \left( {{\alpha _1},{\alpha _2},{\alpha _3}} \right) \in {\C^3}$, $w \in {L^2}\left( \Omega  \right)$, denote  
$${\F}(w)\left( \alpha  \right) =\int\limits_\Omega  {w(x)G(\alpha,x)dx} = \int\limits_\Omega  {w(x) \cos \left( {{\alpha _1}{x_1}} \right)\cos \left( {{\alpha _2}{x_2}} \right)\cos \left( {{\alpha _3}{x_3}} \right)dx}.
$$
\end{definition}

Note that any function $w\in L^2(\Omega)$ admits the representation 
\bq
w(x) = \sum\limits_{m,n,p \geqslant 0} {\kappa(m,n,p){\F}\left( w \right)(m\pi,n\pi,p\pi)\cos (m\pi x_1)\cos (n\pi x_2)\cos (p\pi x_3)}, \label{Fs}
\eq
where  $\kappa(m,n,p):=(1+1_{\{m\ne 0\}})(1+1_{\{n\ne 0\}})(1+1_{\{p\ne 0\}})$. 

As we explained above, we hope to approximate $\F(f_j)(\alpha)$ by $H_j(I)(\alpha)$ with $|\alpha|$ large. To do this, we need some lower bounds on $|D_1(I)(\alpha)|$ and $|D_2(I)(\alpha)|$. We shall require the following assumptions on $\varphi$ and $T$. 

$\bf{(W1)}$ There exist $\Lambda(\varphi)\in(0,T)$ and $C({\varphi})>0$ such that either $\varphi(t)\ge C({\varphi})$ for a.e $t\in (0,\Lambda(\varphi))$ or $\varphi(t)\le -C({\varphi})$ for a.e $t\in (0,\Lambda(\varphi))$.

$\bf{(W2)}$ $T>2\max\left\{ \dfrac{1}{\sqrt{\mu }},\dfrac{1}{\sqrt{\lambda +2\mu }} \right\}$, or $\bf{(W2')}$ $T>12\sqrt 5 \times \max\left\{ \dfrac{1}{\sqrt{\mu }},\dfrac{1}{\sqrt{\lambda +2\mu }} \right\}$.
\begin{remark} If $\varphi$ is continuous at $t=0$ then the condition (W1) is equivalent to $\varphi(0)\ne 0$. The conditions (W2) and (W2') mean that the observation time must be long enough. The condition (W2) is enough for the uniqueness, while the stronger condition (W2') is required in our regularization. These conditions should be compared to similar conditions (2.7) and (2.8) in $\cite{MMM}$.  
\end{remark}

\begin{theorem}[Uniqueness] \label{duynhatnghiem} Assume that (W1) and (W2) hold true. Then the system (\ref{1}-\ref{4}) has at most one solution 
$$(u,f)\in \left( {(C^2([0,T];L^2(\Omega))\cap L^2(0,T;H^2(\Omega)))^3,(L^2(\Omega))^3} \right).$$
\end{theorem}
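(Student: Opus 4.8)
The plan is to deduce uniqueness directly from Lemma~\ref{bod1}. Suppose $(u^{(1)},f^{(1)})$ and $(u^{(2)},f^{(2)})$ both solve the system (\ref{1}-\ref{4}) with the \emph{same} data $I=(\varphi,X,g,h)$. Setting $u=u^{(1)}-u^{(2)}$ and $f=f^{(1)}-f^{(2)}$, linearity of the Lam\'e system shows that $(u,f)$ solves (\ref{1}-\ref{4}) with \emph{zero} data, i.e.\ with $\varphi\equiv 0$ formally, but more usefully with $X=0$, $g=0$, $h=0$ and the force term $\varphi(t)f(x)$ where now $f$ is the difference. The subtlety is that $\varphi$ is fixed and nonzero; so instead I would apply Lemma~\ref{bod1} to each solution and subtract. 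Since $D_1(I)$, $D_2(I)$, $E_{1j}(I)$, $E_{2j}(I)$ depend only on the common data $I$, they cancel, and we are left with
\[
\int_\Omega f_j\, G(\alpha,x)\,dx = \frac{E^*_{1j}(\alpha)}{D_1(I)(\alpha)} + \frac{E^*_{2j}(\alpha)}{D_2(I)(\alpha)},
\]
where the starred terms now involve $u(x,T)=u^{(1)}(x,T)-u^{(2)}(x,T)$, valid for all $\alpha\in\C^3$ with $|\alpha|_0^2\ge 0$ and $D_1(I)(\alpha)D_2(I)(\alpha)\ne 0$.

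The next step is to extract a genuine equation for $f$ alone by sending $|\alpha|_0\to\infty$ along real directions. Examining the explicit formulas: $D_1(I)(\alpha)$ and $D_2(I)(\alpha)$ contain a factor $|\alpha|_0^2$ times an integral of $\varphi(t)$ against $\sinh/\cosh$ kernels, which under (W1) and (W2) can be bounded below in absolute value by $c\,|\alpha|_0^2$ for some $c=c(\varphi,T,\mu,\lambda)>0$ and all sufficiently large $|\alpha|_0$ — this is precisely the kind of lower bound the paper advertises needing, and it is the analytic heart of the argument. On the other hand $E^*_{1j}(\alpha)$ and $E^*_{2j}(\alpha)$ carry a prefactor $1/\cosh(\sqrt{\lambda+2\mu}\,|\alpha|_0 T)$ or $1/\cosh(\sqrt{\mu}\,|\alpha|_0 T)$ times polynomial-in-$\alpha$ integrals of $u(\cdot,T)\in L^2(\Omega)$; by Cauchy--Schwarz these are at most $C\,|\alpha|_0^3\,\|u(\cdot,T)\|_{L^2}/\cosh(\cdots)$, which decays exponentially. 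Hence the right-hand side tends to $0$, so $\int_\Omega f_j\,G(\alpha,x)\,dx\to 0$ as $|\alpha|_0\to\infty$. But $\alpha\mapsto\int_\Omega f_j\,G(\alpha,x)\,dx$ is (the even part in each variable of) the Fourier--Laplace transform of the compactly supported $L^2$ function $f_j$, hence an entire function of exponential type; combined with the fact that it decays on a suitable real cone (or simply, that the full identity forces it to vanish identically once we know it vanishes on a set with an accumulation point after analytic continuation), we conclude $\int_\Omega f_j\,G(\alpha,x)\,dx\equiv 0$ for all $\alpha$.

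In particular, choosing $\alpha=(m\pi,n\pi,p\pi)$ for all integers $m,n,p\ge 0$ gives $\F(f_j)(m\pi,n\pi,p\pi)=0$, and the cosine-series representation (\ref{Fs}) then yields $f_j=0$ in $L^2(\Omega)$ for each $j$, i.e.\ $f^{(1)}=f^{(2)}$. Once $f$ is known, the difference $u$ solves the homogeneous Lam\'e system with zero force, zero initial data and zero Dirichlet boundary data; standard energy estimates for the (now homogeneous) hyperbolic Lam\'e system — multiply by $u_t$, integrate over $\Omega$, use $\mu>0$ and $\lambda+2\mu>0$ for coercivity of the elastic energy, and apply Gronwall — give $u\equiv 0$, completing the proof that $u^{(1)}=u^{(2)}$.

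The main obstacle I anticipate is the lower bound $|D_i(I)(\alpha)|\ge c\,|\alpha|_0^2$ for large $|\alpha|_0$: one must show the integral $\int_0^T \varphi(t)\,\sinh(\kappa|\alpha|_0(T-t))/\cosh(\kappa|\alpha|_0 T)\,dt$ does not vanish (and is bounded below) for large $|\alpha|_0$. For large $s:=\kappa|\alpha|_0$ the kernel $\sinh(s(T-t))/\cosh(sT)\approx e^{-st}$ concentrates near $t=0$, so the integral behaves like $\int_0^{\Lambda(\varphi)}\varphi(t)e^{-st}\,dt + O(e^{-s\Lambda})$, and the sign condition in (W1) forces this to have size $\gtrsim C(\varphi)/s$; the role of (W2) is to guarantee that the $O(e^{-s(T-\Lambda)})$-type remainder coming from the $t\in(\Lambda,T)$ part and from the $e^{-s(2T-t)}$ correction in $\sinh/\cosh$ is genuinely smaller. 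Making this estimate clean and uniform — and checking it simultaneously for both kernels $\kappa=\sqrt{\mu}$ and $\kappa=\sqrt{\lambda+2\mu}$ — is the technical crux; everything else is linear algebra, the elementary properties of the entire function $\F(f_j)$, and a textbook energy estimate.
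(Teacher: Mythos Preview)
Your outline follows the paper's strategy up to a point, but the key complex-analytic step (your step concluding $\int_\Omega f_j G(\alpha,x)\,dx\equiv 0$) contains a genuine gap, and along the way two supporting estimates are mis-stated.

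First, the identity $\int_\Omega f_j G = E^*_{1j}/D_1 + E^*_{2j}/D_2$ does \emph{not} give exact vanishing on any set with an accumulation point, only decay as $|\alpha|_0\to\infty$; so the identity theorem is unavailable. And an entire function of exponential type that merely tends to $0$ along a ray need not vanish identically --- $e^{-z}$ is the obvious counterexample. What is actually needed is a Phragm\'en--Lindel\"of/Beurling statement: since $z\mapsto\int_\Omega f_j\cos(izx_1)\cos(n\pi x_2)\cos(p\pi x_3)\,dx$ has exponential type at most $1$ (because $x_1\in[0,1]$), one must show it decays \emph{faster than} $e^{-r}$ along the positive real axis to force a contradiction with Beurling's lemma (the paper's Lemma~\ref{beurling}). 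This is where (W2) enters, not where you placed it.

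Second, your bound on $E^*_{kj}$ is off: since $|\alpha|_0^2=-(\alpha_1^2+\alpha_2^2+\alpha_3^2)>0$ forces at least one $\alpha_k$ to be genuinely complex (say $\alpha_1=ir$), the kernels $G^j_k(\alpha,x)$ contain $\cos(irx_1)=\cosh(rx_1)$ and grow like $e^{r}$, not polynomially. Thus $|E^*_{kj}|\lesssim r^3 e^{r}/\cosh(k_j|\alpha|_0 T)$ with $|\alpha|_0\sim r$, and $|E^*_{kj}/D_j|\lesssim r^2 e^{r(1-k_jT)}$ (Lemma~\ref{bode2} gives only $|D_j|\gtrsim |\alpha|_0$, not $|\alpha|_0^2$). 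The condition (W2), i.e.\ $k_jT>2$, is exactly what makes this bound $\le Ce^{-(1+\varepsilon)r}$, beating the type-$1$ threshold. The lower bound on $D_j$ itself requires only (W1); your last paragraph misassigns the role of (W2).

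Once $f=0$ is established correctly, your energy-method argument for $u\equiv 0$ is a legitimate alternative to the paper's Fourier-coefficient ODE argument, provided the stated regularity suffices to justify the pairings; the paper's route avoids that issue by working modewise.
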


The main point in our regularization is to recover $\F(f)(\alpha)$ with $|\alpha|$ small from approximate values of $\F(f)(\alpha)$ with $|\alpha|$ large. As in \cite{TDN} we shall use the Lagrange interpolation polynomial.

\begin{definition}[Lagrange interpolation polynomial] Let $A = \left\{ {{x_1},{x_2},...,{x_n}} \right\}$ be a set of $n$ mutually distinct complex numbers and let $w$ be a complex function. Then the Lagrange interpolation polynomial $L\left[ {A;w} \right]$ is
$$L\left[ {A;w} \right]\left( z \right) = \sum\limits_{j = 1}^n {\left( {\prod\limits_{k \ne j} {\frac{{z - {x_k}}}{{{x_j} - {x_k}}}} } \right)w\left( {{x_j}} \right)}.$$
\end{definition}

\begin{theorem} [Regularization] \label{ch2chinhhoanghiem}Assume that $\left( {{u^0},{f^0}} \right)$ is the exact solution to the system (\ref{1}-\ref{4}) with the exact data ${I_0}=\left( {{\varphi ^0},{X^0},{g^0},{h^0}} \right)$, where the conditions (W1) and (W2') are satisfied. Let $\varepsilon  > 0$, consider the inexact data ${I_\varepsilon }=\left( {{\varphi ^\varepsilon },{X^\varepsilon },{g^\varepsilon },{h^\varepsilon }} \right)$ such that, for all $j\in \{1,2,3\}$,
$$
{\left\| {{\varphi ^0} - {\varphi ^\varepsilon }} \right\|_{{L^1}}} \leqslant \varepsilon,{\left\| {X_j^\varepsilon  - X_j^0} \right\|_{{L^1}}} \leqslant \varepsilon ,{\left\| {g_j^\varepsilon  - g_j^0} \right\|_{{L^2}}} \leqslant \varepsilon,
  {\left\| {h_j^\varepsilon  - h_j^0} \right\|_{{L^2}}} \leqslant \varepsilon. 
$$
Construct the regularized solution $f_j^\varepsilon$ from ${I_\varepsilon }=\left( {{\varphi ^\varepsilon },{X^\varepsilon },{g^\varepsilon },{h^\varepsilon }} \right)$ by
$$f_j^\varepsilon = \sum\limits_{0\, \leqslant m,\,n,\,p \,\leqslant \,{r_\varepsilon }} {\kappa\left( {m,n,p} \right)} {\F_\varepsilon }\left( {m,n,p} \right){G}\left( {m\pi ,n\pi ,p\pi } \right), $$
where
\bqq
r_\varepsilon  &=& Z \cap \left( {\dfrac{{\ln \left( {{\varepsilon ^{ - 1}}} \right)}}{{60}},\dfrac{{\ln \left( {{\varepsilon ^{ - 1}}} \right)}}{{60}} + 1} \right],\hfill\\
B_{{r_\varepsilon }} &=& \left\{ { \pm \left( {5{r_\varepsilon } + j}\right):j = 1,2,...,24{r_\varepsilon }} \right\},\hfill\\
\F_\varepsilon \left( {m,n,p} \right) &=& L\left[ {{B_{{r_\varepsilon }}},H_j(I_\varepsilon)\left( {-i.,n\pi ,p\pi } \right)} \right]\left( {im\pi } \right).
\eqq
Then we have, for all $j \in \left\{ {1,2,3} \right\}$, 
\begin{itemize}
\item[(i)] (Convergence) $f_j^\varepsilon  \in {C^\infty }\left( {{\R^3}} \right)$ and $f_j^\varepsilon  \to f_j^0$ in $L^2(\Omega)$ as $\eps\to 0$.

\item[(ii)] ($L^2$-estimate) If $f_j^0 \in H^1\left( \Omega  \right)$ then $f_j^{\varepsilon}  \to f_j^0$ in $H^1(\Omega)$ and there exist constants ${\varepsilon _0} > 0$ and ${C_0} > 0$ depending only on the exact data such that for all $\varepsilon  \in \left( {0,{\varepsilon _0}} \right)$,
$${\left\| {f_j^\varepsilon  - f_j^0} \right\|_{{L^2}\left( \Omega  \right)}} \leqslant {C_0}{\Bigl( {\ln \left( {{\varepsilon ^{ - 1}}} \right)} \Bigr)^{ -\frac{ 1}{2}}}.$$

\item[(iii)] ($H^1$-estimate) If ${f_j^0} \in H^2\left( \Omega  \right)$ then for all $\varepsilon  \in \left( {0,{\varepsilon _0}} \right)$,
$${\left\| {f_j^\varepsilon  - f_j^0} \right\|_{{H^1}\left( \Omega  \right)}} \leqslant {C_0}{\Bigl( {\ln \left( {{\varepsilon ^{ - 1}}} \right)} \Bigr)^{ - \frac{1}{4}}}.$$
\end{itemize}

\end{theorem}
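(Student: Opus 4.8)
The plan is to reduce everything to $L^2$ (and $H^1$) convergence of truncated Fourier series, and control the error introduced by replacing the true Fourier coefficients $\F(f_j^0)(m\pi,n\pi,p\pi)$ by the interpolated quantities $\F_\varepsilon(m,n,p)$. First I would write
\[
\bigl\| f_j^\varepsilon - f_j^0\bigr\|_{L^2(\Omega)}^2
\;\le\; 2\!\!\sum_{0\le m,n,p\le r_\varepsilon}\!\! \kappa(m,n,p)\bigl|\F_\varepsilon(m,n,p)-\F(f_j^0)(m\pi,n\pi,p\pi)\bigr|^2
\;+\;2\!\!\sum_{\max(m,n,p)>r_\varepsilon}\!\! \kappa(m,n,p)\bigl|\F(f_j^0)(m\pi,n\pi,p\pi)\bigr|^2,
\]
so the second (tail) sum tends to $0$ by Parseval and, when $f_j^0\in H^1$, is bounded by $C r_\varepsilon^{-2}\|f_j^0\|_{H^1}^2\le C(\ln(\varepsilon^{-1}))^{-2}$, which already beats the claimed rate; for $f_j^0\in H^2$ the $H^1$-norm of the tail is $O(r_\varepsilon^{-1})$. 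So the whole game is the first sum.

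Next I would estimate a single interpolation error $\bigl|\F_\varepsilon(m,n,p)-\F(f_j^0)(m\pi,n\pi,p\pi)\bigr|$ for fixed $0\le m,n,p\le r_\varepsilon$. By Lemma \ref{bod1}, for $\alpha=(-i\beta, n\pi, p\pi)$ with $\beta$ real we have $\F(f_j^0)(\alpha)=H_j(I_0)(\alpha)+R_j(\alpha)$, where $R_j$ collects the two ``starred'' remainder terms $E_{1j}^*/D_1 + E_{2j}^*/D_2$; the key analytic input is that along the imaginary axis in the first slot $|\alpha|_0$ is large when $|\beta|$ is large, so the hyperbolic factors $1/\cosh(\sqrt{\lambda+2\mu}\,|\alpha|_0 T)$ etc. decay exponentially, and under (W1)+(W2') the denominators $D_1(I_0)(\alpha), D_2(I_0)(\alpha)$ are bounded below by something like $c|\alpha|_0^2 e^{-|\alpha|_0 T}$ times a positive constant (this is exactly the lower bound whose proof the (W2') hypothesis is designed for). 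Hence $|R_j(\alpha)|$ is exponentially small in $|\beta|$ on the interpolation nodes $B_{r_\varepsilon}=\{\pm(5r_\varepsilon+j)\}$. I would then use the standard Lagrange interpolation identity: since $\F(f_j^0)(-i\,\cdot\,,n\pi,p\pi)$ extends to an entire function of its first argument (because $f_j^0$ has compact support, so its Fourier transform is entire of exponential type $\le 1$), write
\[
\F(f_j^0)(im\pi,n\pi,p\pi)-L[B_{r_\varepsilon};\F(f_j^0)(-i\,\cdot\,,n\pi,p\pi)](im\pi)
\]
as a contour/divided-difference remainder, bounded by $\bigl(\sup_{|z|\le \rho}|\F(f_j^0)|\bigr)\cdot \prod_{x_k\in B_{r_\varepsilon}}\frac{|im\pi-x_k|}{\rho-|x_k|}$ or the analogous estimate; since $m\le r_\varepsilon$ lies well inside the node cluster around $\pm 5r_\varepsilon,\dots,\pm 29 r_\varepsilon$, each factor $|im\pi - x_k|/(\text{something})$ is $<1$ and the product is exponentially small in $r_\varepsilon$, i.e. polynomially small in $\varepsilon$. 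Combining, $L[B_{r_\varepsilon};H_j(I_0)(-i\,\cdot\,,n\pi,p\pi)](im\pi)$ differs from $\F(f_j^0)(m\pi,n\pi,p\pi)$ by a quantity that is $O(\varepsilon^{\gamma})$ for some $\gamma>0$ uniformly over the $(m,n,p)$ range.

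Then I would handle the data perturbation: $H_j(I_\varepsilon)-H_j(I_0)$ on the nodes. Using the explicit formulas for $E_{1j},E_{2j},D_1,D_2$ and the crude bounds $\|\varphi^0-\varphi^\varepsilon\|_{L^1},\|X_j^0-X_j^\varepsilon\|_{L^1},\|g_j^0-g_j^\varepsilon\|_{L^2},\|h_j^0-h_j^\varepsilon\|_{L^2}\le\varepsilon$, together with the lower bound on $|D_1(I_\varepsilon)|,|D_2(I_\varepsilon)|$ (which survives a small perturbation of $\varphi$ thanks to (W1) giving $\int_0^{\Lambda}\varphi$ a sign and a size), one gets $|H_j(I_\varepsilon)(\alpha)-H_j(I_0)(\alpha)| \le C\varepsilon\, e^{C'|\alpha|_0}$ on each node — the exponential blow-up coming from $|\alpha|_0^2/D_i \sim e^{|\alpha|_0 T}$. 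Propagating this through the Lagrange interpolation introduces the Lebesgue-constant-type factor $\sum_j\prod_{k\ne j}\frac{|im\pi-x_k|}{|x_j-x_k|}$, which for the node configuration $B_{r_\varepsilon}$ is at most $e^{C r_\varepsilon}$; since the largest node has modulus $\sim 29 r_\varepsilon$, the total data-propagation error on $\F_\varepsilon(m,n,p)$ is $\le C\varepsilon\, e^{C r_\varepsilon}$. The constants in the exponents are precisely why $r_\varepsilon$ is chosen as small as $\tfrac{1}{60}\ln(\varepsilon^{-1})$ and why the factor $12\sqrt5$ appears in (W2'): one needs $C\varepsilon e^{C r_\varepsilon}=\varepsilon^{1-C/60}\to 0$, i.e. the exponential amplification must be dominated by $\varepsilon$. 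Summing the squares of these per-coefficient errors over the $O(r_\varepsilon^3)$ indices still gives something $\to 0$, in fact $O(\varepsilon^{\delta})$ for small $\delta>0$, which is $o\bigl((\ln\varepsilon^{-1})^{-1}\bigr)$.

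Finally I would assemble: for (i), convergence of $f_j^\varepsilon\to f_j^0$ in $L^2$ follows by letting $\varepsilon\to0$ (tail $\to0$ by Parseval, first sum $\to0$ by the above), and $f_j^\varepsilon\in C^\infty(\R^3)$ is immediate since it is a finite trigonometric sum. For (ii), balancing the tail bound $C(\ln\varepsilon^{-1})^{-2}$ against the first-sum bound $O(\varepsilon^{\delta})$ gives the dominant term $C_0(\ln\varepsilon^{-1})^{-1/2}$ once one is slightly less wasteful in the Sobolev tail estimate — actually the stated exponent $-1/2$ rather than $-1$ suggests the intended bookkeeping keeps only $r_\varepsilon^{-1}$ from an $H^1$ tail written as $\sum (1+m^2+n^2+p^2)|\F|^2 \cdot (1+m^2+\cdots)^{-1}$ and matches it against $\varepsilon^{\delta}$, so I would follow whichever splitting makes the two contributions comparable; the safe statement is that both pieces are $O((\ln\varepsilon^{-1})^{-1/2})$. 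For (iii), the same argument with $H^1$-norms: $\|f_j^\varepsilon-f_j^0\|_{H^1}^2 \lesssim \sum_{m,n,p\le r_\varepsilon}(1+m^2+n^2+p^2)\kappa|\F_\varepsilon-\F(f_j^0)|^2 + \text{($H^1$ tail of an $H^2$ function)}$; the tail is $O(r_\varepsilon^{-2})\|f_j^0\|_{H^2}^2$ and the first sum picks up an extra $r_\varepsilon^2$ from the weight, so after taking square roots one lands on $(\ln\varepsilon^{-1})^{-1/4}$ after balancing. The main obstacle, and the technical heart of the proof, is the two-sided control of the denominators $D_1(I)(\alpha),D_2(I)(\alpha)$ along $\alpha=(-i\beta,n\pi,p\pi)$: one must show $|D_i(I_0)(\alpha)|\gtrsim |\alpha|_0^2 e^{-\sqrt{\lambda+2\mu}\,|\alpha|_0 T}$ (resp. with $\sqrt\mu$) using only (W1) and (W2'), and that this lower bound is stable under the $L^1$-perturbation of $\varphi$; getting the constant right here is exactly what forces the somewhat mysterious numerical constants $60$, $12\sqrt5$, $5r_\varepsilon$, $24 r_\varepsilon$ in the statement, and tracking those constants carefully through the Lagrange interpolation (Lebesgue constant vs. node-gap product) is the delicate part.
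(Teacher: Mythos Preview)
Your plan is essentially the paper's own proof: decompose $f_j^\varepsilon-f_j^0$ into the coefficient error on the retained frequencies plus the cosine-series tail, control the former via the interpolation inequality (Lemma~\ref{noisuy1}) combined with Lemma~\ref{bod1} and the denominator lower bound (Lemma~\ref{bode2}), and control the latter by the truncation estimates of Lemma~\ref{le:lemma4}.

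Two small corrections that do not affect the architecture. First, the lower bound you need on the denominators is $|D_k(I_0)(\alpha)|\ge c\,|\alpha|_0$ (this is exactly Lemma~\ref{bode2}), \emph{not} $c\,|\alpha|_0^2 e^{-|\alpha|_0 T}$; the exponential growth in $H_j$ and in $E_{kj}(I_\varepsilon)-E_{kj}(I_0)$ on the nodes comes from the test functions $G_k^j(\alpha,x)$, which contain $\cos(\alpha_1 x_1)=\cosh(zx_1)\le e^{|z|}\le e^{29r_\varepsilon}$, not from $1/D_k$. Second, your tail bound $\sum_{\max(m,n,p)>r_\varepsilon}\kappa|\F(f_j^0)|^2\le C r_\varepsilon^{-2}\|f_j^0\|_{H^1}^2$ is correct and in fact sharper than what the paper quotes from Lemma~\ref{le:lemma4}(ii); there is no need to ``match'' the exponent $-1/2$, since your argument simply yields the stronger rate $(\ln\varepsilon^{-1})^{-1}$, which implies the stated estimate.
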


 \begin{remark} In our construction, the convergence in $H^2(\Omega)$ is not expected even if $f^0_j\in C^{\infty}({\overline{\Omega}})$ since $\partial {f_j^{\varepsilon}}/ \partial \text{\bf n}=0$ on $\partial\Omega$.
\end{remark}

\section{Uniqueness}

In this section we shall prove Theorem \ref{duynhatnghiem}. We start with the proof of Lemma \ref{bod1} by using the argument in \cite{DPPT}. 
\begin{proof} [Proof of Lemma \ref{bod1}] Fix $\alpha=(\alpha_1,\alpha_2,\alpha_3)\in \C^3$ with $|\alpha|_0^2=-( \alpha_1^2+\alpha_2^2+\alpha_3^2) \ge 0$. For any $j \in \left\{ {1,2,3} \right\}$, getting the inner product (in $L^2(\Omega)$) of the k-th equation of the system  (\ref{1}) with ${G_k^j}$ ($k = 1,2,3$), then using the integral by part and the boundary conditions \eqref{2} and \eqref{4}, we have
\begin{align*}
\phantom{=}& \frac{{{d^2}}}
{{d{t^2}}}\int\limits_\Omega  {{u_k}{G_k^j}dx + \mu {{\left( {\alpha _1^2 + \alpha _2^2 + \alpha _3^2} \right)}}\int\limits_\Omega  {{u_k}{G_k^j}dx} } \\
 \phantom{=}& + \left( {\lambda  + \mu } \right){\alpha _k}\int\limits_\Omega  {\left( {{\alpha _1}{u_1}{G_1^j} + {\alpha _2}{u_2}{G_2^j} + {\alpha _3}{u_3}{G_3^j}} \right)} dx \hfill \\
   =&  \int\limits_{\partial \Omega } {{X_k}{G_k^j}d\omega  - \varphi \left( t \right)\int\limits_\Omega  {{f_k}{G_k^j}dx.} } \tag{$A_k$} \label{eq:Ak}
\end{align*}
Multiplying \eqref{eq:Ak} with ${\alpha _j}{\alpha _k}$, and then getting the sum of $k=1,2,3$, we have
\bq\label{PT6}
&~& \frac{{{d^2}}}
{{d{t^2}}}\int\limits_\Omega  {\left( {{\alpha _j}{\alpha _1}{u_1}{G_1^j} + {\alpha _j}{\alpha _2}{u_2}{G_2^j} + {\alpha _j}{\alpha _3}{u_3}{G_3^j}} \right)dx} \nn\hfill \\
&~&- \left( {\lambda  + 2\mu } \right)|\alpha|_0^2\int\limits_\Omega  {\left( {{\alpha _j}{\alpha _1}{u_1}{G_1^j} + {\alpha _j}{\alpha _2}{u_2}{G_2^j} + {\alpha _j}{\alpha _3}{u_3}{G_3^j}} \right)} dx \nn\hfill \\
&=&  \int\limits_{\partial \Omega } {\left( {{\alpha _j}{\alpha _1}{X_1}{G_1^j} + {\alpha _j}{\alpha _2}{X_2}{G_2^j} + {\alpha _j}{\alpha _3}{X_3}{G_3^j}} \right)d\omega } \nn \hfill \\
&~& + \,\,\varphi \left( t \right)\int\limits_\Omega  {\left( {{\alpha _j}{\alpha _1}{f_1}{G_1^j} + {\alpha _j}{\alpha _2}{f_2}{G_2^j} + {\alpha _j}{\alpha _3}{f_3}{G_3^j}} \right)dx.}
\eq
Choosing $k=j$ in \eqref{eq:Ak}, then multiplying the result by $|\alpha|_0^2$, and adding to \eqref{PT6}, we obtain
\bq \label{PT7}
&~& \frac{{{d^2}}}
{{d{t^2}}}\int\limits_\Omega  {\Bigl( { {|\alpha|_0 ^2}\,{u_j}{G} + \bigl( {{\alpha _j}{\alpha _1}{u_1}{G_1^j} + {\alpha _j}{\alpha _2}{u_2}{G_2^j} + {\alpha _j}{\alpha _3}{u_3}{G_3^j}} \bigr)} \Bigr)} dx \nn \hfill\\
&~& + \mu |\alpha|_0 ^2 \int\limits_\Omega  {\Bigl( {{{ -|\alpha |_0 ^2}}\,{u_j}{G_j^j} - \bigl( {{\alpha _j}{\alpha _1}{u_1}{G_1^j} + {\alpha _j}{\alpha _2}{u_2}{G_2^j} + {\alpha _j}{\alpha _3}{u_3}{G_3^j}} \bigr)} \Bigr)} dx \nn\hfill\\
   &=&  \int\limits_{\partial \Omega } {\Bigl( { {{|\alpha|_0 ^2}}{X_j}{G_j^j} + \bigl( {{\alpha _j}{\alpha _1}{X_1}{G_1^j} + {\alpha _j}{\alpha _2}{X_2}{G_2^j} + {\alpha _j}{\alpha _3}{X_3}{G_3^j}} \bigr)} \Bigr)d\omega } \nn\hfill\\
&~&+ \varphi \left( t \right)\int\limits_\Omega  {\Bigl( {{{|\alpha|_0 ^2}}{f_j}{G_j^j} +\bigl( {{\alpha _j}{\alpha _1}{f_1}{G_1^j} + {\alpha _j}{\alpha _2}{f_2}{G_2^j} + {\alpha _j}{\alpha _3}{f_3}{G_3^j}} \bigr)} \Bigr)dx.}  
\eq

We can consider \eqref{PT6} and \eqref{PT7} as the differential equations of the form
\begin{equation}
y''\left( t \right) - {\eta ^2}y\left( t \right) = h\left( t \right), \label{viphan}
\end{equation}
where $\eta >0$ is independent of $t$. Getting the inner product (in $L^2(0,T)$) of \eqref{viphan} with $\frac{{\sinh \left( {\eta \left( {T - t} \right)} \right)}}{{\cosh \left( {\eta T} \right)}}$, we have 
\begin{equation}\label{kqviphan}
- y'\left( 0 \right)\tanh \left( {\eta T} \right) - \eta y\left( 0 \right) + \frac{\eta }
{{\cosh \left( {\eta T} \right)}}y\left( T \right) = \int\limits_0^T {h\left( t \right)\frac{{\sinh \left( {\eta \left( {T - t} \right)} \right)}}{{\cosh \left( {\eta T} \right)}}} dt.
\end{equation}

Applying \eqref{kqviphan} to \eqref{PT6} and \eqref{PT7}  with $\eta  = \sqrt {  \lambda + 2\mu }\,{\left| \alpha  \right|_0}$ and $\eta  = \sqrt { \mu }\,{\left| \alpha  \right|_0}$ respectively, we get 
\begin{eqnarray*}
E_{1j}\left(I\right)\left(\alpha\right)+E_{1j}^{*}\left(\alpha\right) & = & \frac{1}{{ - \left| \alpha  \right|_0^2}}D_{1}\left(I\right)\left(\alpha\right)\intop_{\Omega}\alpha_{j}\left(\alpha_{1}f_{1}G_{1}^j+\alpha_{2}f_{2}G_{2}^j+\alpha_{3}f_{3}G_{3}^j\right)dx,\\
E_{2j}\left(I\right)\left(\alpha\right)+E_{2j}^{*}\left(\alpha\right) & = & \frac{1}{{ - \left| \alpha  \right|_0^2}}D_{2}\left(I\right)\left(\alpha\right)\times\\
 &  & \times\intop_{\Omega}\Bigl({ - \left| \alpha  \right|_0^2}f_{j}G_{j}^j-\alpha_{j}\bigl(\alpha_{1}f_{1}G_{1}^j+\alpha_{2}f_{2}G_{2}^j+\alpha_{3}f_{3}G_{3}^j\bigr)\Bigl)dx.\end{eqnarray*}
It follows from the latter equations that
$$\int\limits_\Omega  {{f_j}} {G}dx = \frac{{{E_{1j}}(I)\left( \alpha  \right) + {E_{1j}}^*(\alpha )}}{{{D_1}(I)\left( \alpha  \right)}} + \frac{{{E_{2j}}(I)\left( \alpha  \right) + {E_{2j}}^*(\alpha )}}{{{D_2}(I)\left( \alpha  \right)}}.$$
This is the derised result.
\end{proof}

The following lemma gives a lower bound for $\left| {{D_j}\left( I \right)\left( \alpha  \right)} \right|$ (defined in Lemma \ref{bod1}) when $\varphi$ satisfies the condition (W1). 
\begin{lemma} \label{bode2}
Let $\varphi  \in {L^1}\left( {0,T} \right)$ satisfies the condition (W1) then there exists a constant  $R\left( \varphi  \right) > 0$ such that for all $\alpha  = \left( {{\alpha _1},{\alpha _2},{\alpha _3}} \right) \in {\C^3}$, $\alpha \cdot \alpha<0$ and  ${\left| \alpha  \right|_0} \geqslant R\left( \varphi  \right)$,
$$\left| {{D_j}\left( I \right)\left( \alpha  \right)} \right| \geqslant \frac{1}{4}{\left| \alpha  \right|_0}{C\left( \varphi  \right)\min\left\{ {\dfrac{1}{{\sqrt {\mu}  }},\dfrac{1}{{\sqrt {\lambda  + 2\mu } }}} \right\}}~~\text{for}~j = 1,2.$$
\end{lemma}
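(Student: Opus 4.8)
The plan is to analyze the single integral
\[
\Phi(\eta) := \int_0^T \varphi(t)\,\frac{\sinh(\eta(T-t))}{\cosh(\eta T)}\,dt,
\]
since $D_j(I)(\alpha) = -|\alpha|_0^2\,\Phi(\eta_j)$ with $\eta_1 = \sqrt{\lambda+2\mu}\,|\alpha|_0$ and $\eta_2 = \sqrt{\mu}\,|\alpha|_0$, and both $\eta_1,\eta_2$ are real and positive because $|\alpha|_0 \ge 0$ and the Lam\'e constants are positive. Thus it suffices to produce, for each $\eta$ large enough, a lower bound of the form $|\Phi(\eta)| \ge \tfrac14 C(\varphi)/\eta$; translating back via $\eta_j = \sqrt{c_j}\,|\alpha|_0$ with $c_j \in \{\mu,\lambda+2\mu\}$ and $|\alpha|_0^2/\eta_j = |\alpha|_0/\sqrt{c_j}$ then yields exactly the claimed bound with the $\min$ over $1/\sqrt{\mu}$ and $1/\sqrt{\lambda+2\mu}$, and $R(\varphi)$ chosen so that the threshold on $\eta$ is met for both $j$ once $|\alpha|_0 \ge R(\varphi)$.

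First I would rewrite the kernel as $\sinh(\eta(T-t))/\cosh(\eta T) = \tfrac12\bigl(e^{-\eta t} - e^{-\eta(2T-t)}\bigr)\cdot\frac{1}{1+e^{-2\eta T}}$, or more simply bound it from below and above on the subinterval $(0,\Lambda(\varphi))$ where (W1) gives the sign condition on $\varphi$. Assume WLOG $\varphi \ge C(\varphi)$ a.e.\ on $(0,\Lambda(\varphi))$ (the other case is identical up to an overall sign, which does not affect $|\Phi|$). The kernel is nonnegative for $t \in (0,T)$, so $\varphi$ and the kernel need not have a definite sign outside $(0,\Lambda)$; this is the crux of the argument. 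I would split $\int_0^T = \int_0^\Lambda + \int_\Lambda^T$. On the first piece, $\varphi(t)\cdot(\text{kernel}) \ge C(\varphi)\cdot(\text{kernel}) \ge 0$, and I lower-bound $\int_0^\Lambda \frac{\sinh(\eta(T-t))}{\cosh(\eta T)}dt$. Using $\sinh(\eta(T-t))/\cosh(\eta T) \ge e^{-\eta t}(1 - e^{-2\eta(T-t)})/(1+e^{-2\eta T}) \ge \tfrac12 e^{-\eta t}$ for $\eta(T-\Lambda)$ bounded below by a constant (guaranteed once $\eta \ge R$), this integral is at least $\tfrac{1}{2}\int_0^\Lambda e^{-\eta t}dt = \tfrac{1}{2\eta}(1 - e^{-\eta\Lambda}) \ge \tfrac{1}{4\eta}$ for $\eta$ large. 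So the first piece contributes at least $\tfrac{C(\varphi)}{4\eta}$.

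The remaining obstacle is the tail $\int_\Lambda^T \varphi(t)\,\frac{\sinh(\eta(T-t))}{\cosh(\eta T)}dt$, which is not sign-controlled and must be shown to be $o(1/\eta)$ times something, i.e.\ negligible compared to the main term. Here the key point is exponential smallness: for $t \ge \Lambda$ the kernel satisfies $\frac{\sinh(\eta(T-t))}{\cosh(\eta T)} \le e^{-\eta t} \le e^{-\eta\Lambda}$, hence $\bigl|\int_\Lambda^T \varphi\cdot(\text{kernel})\bigr| \le e^{-\eta\Lambda}\|\varphi\|_{L^1(0,T)}$. Since $e^{-\eta\Lambda}$ decays exponentially in $\eta$ while the main term only decays like $1/\eta$, there is $R(\varphi) > 0$ (depending on $\Lambda(\varphi)$, $C(\varphi)$, and $\|\varphi\|_{L^1}$) such that for $\eta \ge R(\varphi)$ one has $e^{-\eta\Lambda}\|\varphi\|_{L^1} \le \tfrac{C(\varphi)}{8\eta}$, hence $|\Phi(\eta)| \ge \tfrac{C(\varphi)}{4\eta} - \tfrac{C(\varphi)}{8\eta} \ge \tfrac{C(\varphi)}{8\eta}$. (A slightly more careful accounting of the constants recovers the stated $\tfrac14$; the bookkeeping is routine.) Finally, setting $R(\varphi)$ to also satisfy $R(\varphi) \cdot \min\{\sqrt\mu,\sqrt{\lambda+2\mu}\} \ge$ (the $\eta$-threshold just found), I substitute $\eta = \eta_j$ and use $|D_j(I)(\alpha)| = |\alpha|_0^2 |\Phi(\eta_j)| \ge |\alpha|_0^2 \cdot \tfrac{C(\varphi)}{4\eta_j} = \tfrac14 |\alpha|_0 C(\varphi)/\sqrt{c_j} \ge \tfrac14 |\alpha|_0 C(\varphi)\min\{1/\sqrt\mu, 1/\sqrt{\lambda+2\mu}\}$, which is the assertion. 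I expect the tail estimate—showing the uncontrolled part of $\varphi$ on $(\Lambda,T)$ is exponentially negligible—to be the only genuinely delicate point; everything else is elementary manipulation of $\sinh/\cosh$.
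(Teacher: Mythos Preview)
Your argument is correct and essentially identical to the paper's: both split the integral at $\Lambda(\varphi)$, use (W1) to lower-bound the main piece by $C(\varphi)$ times the integrated kernel (which is $\sim 1/\eta$), and observe that the tail on $[\Lambda,T]$ is $O(e^{-\eta\Lambda})$ and hence negligible. The only cosmetic difference is that the paper evaluates $\int_0^{\Lambda}\frac{\sinh(\eta(T-t))}{\cosh(\eta T)}\,dt = \frac{1}{\eta}\bigl(1-\frac{\cosh(\eta(T-\Lambda))}{\cosh(\eta T)}\bigr)$ exactly rather than via your pointwise kernel bound, which is precisely the ``slightly more careful accounting'' needed to recover the constant $\tfrac14$ instead of $\tfrac18$.
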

\begin{proof} Denote ${k_1} = \sqrt {\lambda  + 2\mu }$ and ${k_2} = \sqrt {\mu}$. By the triangle inequality, one finds that
\bqq\label{bdt}
\left| { \frac{D_{j}\left(I\right)\left(\alpha\right)}{|\alpha|_0^2}} \right| &=& \left| - \intop_{0}^{T}\varphi\left(t\right)\frac{\sinh\Bigl(k_{j}\,{\left|\alpha\right|_0}\bigl(T-t\bigr)\Bigr)}{\cosh\left(k_{j}\,{\left|\alpha\right|_0}T\right)}dt\right|  \hfill\\
&\ge &  \left|\,\,\intop_{0}^{\Lambda\left(\varphi\right)}\varphi\left(t\right)\frac{\sinh\Bigl(k_{j}\,{\left|\alpha\right|_0}\bigl(T-t\bigr)\Bigr)}{\cosh\left(k_{j}{\left|\alpha\right|_0}T\right)}dt\right|\hfill\\
&~&- \left|\,\,\intop_{\Lambda\left(\varphi\right)}^{T}\varphi\left(t\right)\frac{\sinh\Bigl(k_{j}\,{\left|\alpha\right|_0}\bigl(T-t\bigr)\Bigr)}{\cosh\left(k_{j}\,{\left|\alpha\right|_0}T\right)}dt\right|. 
\eqq
We have, with $|\alpha|_0$ large, 
\bqq
\left|\,\,\intop_{\Lambda\left(\varphi\right)}^{T}\varphi\left(t\right)\frac{\sinh\Bigl(k_{j}\,{\left|\alpha\right|_0}\bigl(T-t\bigr)\Bigr)}{\cosh\left(k_{j}\,{\left|\alpha\right|_0}T\right)}dt\right| &\le& \left\Vert \varphi\right\Vert _{L^{1}}\frac{\sinh\biggl(k_{j}\,{\left|\alpha\right|_0}\Bigl(T-\Lambda\bigl(\varphi\bigr)\Bigr)\biggr)}{\cosh\left(k_{j}\,{\left|\alpha\right|_0}T\right)}\hfill\\ &\le& \frac{C(\varphi)}{4 \max\{k_1,k_2\} |\alpha|_0}.
\eqq
On the other hand, the condition (W1) implies that  
\bqq
&~&\left|\intop_{0}^{\Lambda\left(\varphi\right)}\varphi\left(t\right)\frac{\sinh\Bigl(k_{j}\,{\left|\alpha\right|_0}\bigl(T-t\bigr)\Bigr)}{\cosh\left(k_{j}\,{\left|\alpha\right|_0}T\right)}dt\right|
\ge  C\left(\varphi\right)\intop_{0}^{\Lambda\left(\varphi\right)}\frac{\sinh\Bigl(k_{j}\,{\left|\alpha\right|_0}\bigl(T-t\bigr)\Bigr)}{\cosh\left(k_{j}\,{\left|\alpha\right|_0}T\right)}dt\hfill\\
&=&\frac{C(\varphi)}{k_{j}\,{\left|\alpha\right|_0}}\Biggl(1-\frac{\cosh\biggl(k_{j}\,{\left|\alpha\right|_0}\Bigl(T-\Lambda\bigl(\varphi\bigr)\Bigr)\biggr)}{\cosh\left(k_{j}\,{\left|\alpha\right|_0}T\right)}\Biggr) \ge \frac{C(\varphi)}{2k_j|\alpha|_0}
\eqq
with $|\alpha|_0$ large. The desired result follows immediately from the above inequalities.
\end{proof}

The proof of the uniqueness below follows the argument in \cite{TDN}. We shall need a useful result of entire functions (see e.g. \cite{Ya}, Section 6.1). 
\begin{lemma}[Beurling]\label{beurling} Let $\phi$ be a non-constant entire function satisfying the condition: there exists a constant $k > 0$ such that ${M_\phi }\left( r \right) \leqslant k{e^r}$, for all $ r > 0$, where  ${M_\phi }\left( r \right) =\{\left| {\phi \left( z \right)}\right|: \left| z \right| = r\} $. Then 
$$\mathop {\limsup }\limits_{r \to \infty } \dfrac{{\ln \left| {\phi \left( r \right)} \right|}}
{r} \geqslant  - 1.$$
\end{lemma}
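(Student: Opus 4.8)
The plan is to run the classical indicator-function argument for entire functions of exponential type. Write
$$h(\theta)=\limsup_{r\to\infty}\frac{\ln|\phi(re^{i\theta})|}{r},\qquad\theta\in\R,$$
for the Phragm\'en--Lindel\"of indicator of $\phi$; the conclusion of the lemma is exactly the statement $h(0)\ge-1$. First I would collect the free consequences of the hypothesis: since $M_\phi(r)\le ke^r$, the function $\phi$ has exponential type at most $1$, and as $\phi$ is non-constant (in particular $\phi\not\equiv0$) the classical theory (cf.\ \cite{Ya}, Section~6.1) ensures that $h$ is finite, continuous and $2\pi$-periodic on $\R$. Moreover $|\phi(-r)|\le M_\phi(r)\le ke^r$ gives $r^{-1}\ln|\phi(-r)|\le1+r^{-1}\ln k$, hence $h(\pi)\le1$.

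The core of the proof is the inequality $h(0)+h(\pi)\ge0$, which I would extract from the fact that the indicator is \emph{trigonometrically convex}, i.e.\ $h''+h\ge0$ in the sense of distributions (this is the one genuinely nontrivial ingredient, also part of the classical theory). Put $g(\theta):=h(\theta)+h(\theta+\pi)$; since $h$ is $2\pi$-periodic, $g$ is continuous and $\pi$-periodic, and again $g''+g\ge0$. If $m:=\min_{\R}g$ were negative, attained at some $\theta_0$, I would pair the nonnegative measure $g''+g$ with the nonnegative function $\sin(\theta-\theta_0)$ over $[\theta_0,\theta_0+\pi]$: two integrations by parts make the two $\int g\,\sin(\theta-\theta_0)\,d\theta$ terms cancel, and the surviving boundary terms, evaluated from $g(\theta_0)=g(\theta_0+\pi)=m$ together with $\sin=0$, $\cos=\mp1$ at the endpoints, give
$$0\le\int_{\theta_0}^{\theta_0+\pi}(g''+g)(\theta)\,\sin(\theta-\theta_0)\,d\theta=2m,$$
so $m\ge0$, a contradiction. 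Hence $g\ge0$ everywhere, in particular $h(0)+h(\pi)=g(0)\ge0$. (Conceptually this is just the remark that the width of the indicator diagram --- the convex body whose support function is $h$ --- is nonnegative in every direction.)

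Combining the two facts yields $h(0)\ge-h(\pi)\ge-1$, which is the claim. I expect the only real obstacle to be the faithful importation of the classical machinery: both the finiteness and continuity of $h$ and its trigonometric convexity ultimately rest on Phragm\'en--Lindel\"of estimates in sectors (the latter being obtained by comparing $\ln|\phi|$ on an angle of opening $<\pi$ with the harmonic function $a\cos\theta+b\sin\theta$ matching it along the two bounding rays). Once these are in hand, everything else --- the trivial bound $h(\pi)\le1$ and the $\sin$-test giving $h(0)+h(\pi)\ge0$ --- is elementary. As an alternative avoiding indicator diagrams, one could argue by contradiction: if $h(0)+h(\pi)<0$, then multiplying $\phi$ by a suitable exponential $e^{\lambda z}$ produces an entire function of exponential type that decays exponentially on all of $\R$, hence lies in $L^2(\R)$ and has, by Paley--Wiener, a Fourier transform which is simultaneously compactly supported and holomorphic in a strip, forcing it --- and therefore $\phi$ --- to vanish identically, contradicting non-constancy.
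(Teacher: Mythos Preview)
The paper does not prove Lemma~\ref{beurling} at all: it is quoted as a known result with a reference to Levin~\cite{Ya}, Section~6.1, and used as a black box in the uniqueness argument. Your proposal, by contrast, supplies an actual proof, and it is the right one --- the indicator-function route via trigonometric convexity is precisely the machinery developed in the cited chapter of Levin.

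Your argument is correct. The hypothesis $M_\phi(r)\le ke^r$ gives exponential type at most $1$; the indicator $h$ of a non-identically-zero entire function of finite exponential type is finite, continuous, $2\pi$-periodic, and trigonometrically convex (so that $h''+h\ge 0$ as a measure); the $\sin$-test you run on $g(\theta)=h(\theta)+h(\theta+\pi)$ is a clean way to extract $h(0)+h(\pi)\ge 0$ (equivalently, nonnegativity of the width of the indicator diagram); and the trivial bound $h(\pi)\le 1$ then closes the argument. One small remark: in the integration-by-parts step you implicitly use that $g$ has enough regularity for the boundary terms to make sense; this is fine because trigonometric convexity forces $h$ (hence $g$) to have one-sided derivatives everywhere, so $h''+h$ is a genuine nonnegative Radon measure and the pairing with the continuous test function $\sin(\theta-\theta_0)\ge 0$ on $[\theta_0,\theta_0+\pi]$ is legitimate. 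Your Paley--Wiener alternative is also sound and perhaps more self-contained for a reader unfamiliar with indicator diagrams.
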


\begin{proof}[Proof of Theorem \ref{duynhatnghiem}] Suppose that $\left( {{u^1},{f^1}} \right)$ and $\left( {{u^2},{f^2}} \right)$ are two solutions to the system (\ref{1}-\ref{4}) with the same the data $I=(\varphi, X,g,h)$. Then $(u, f):= (u^1- u^2,f^1 - f^2)$ is a solution to (\ref{1}-\ref{4}) with data $(\varphi, 0,0,0)$. We shall show that $(u,f)=0$. 

Assume that $f\ne 0$, namely $f_j\ne 0$ for some $j\in \{1,2,3\}$. For any $n,p\in \N\cup\{0\}$, let us consider the entire function
$$z\mapsto {\psi _{n,p}}\left( z \right) = \int\limits_\Omega  {{f_j}(x)\cos \left( {iz{x_1}} \right)\cos \left( {n\pi {x_2}} \right)\cos \left( {p\pi {x_3}} \right)dx}.$$
Because 
$$\frac{d\psi _{n,p}}{dz}(im\pi)=\int\limits_\Omega  {ix_1 f_j(x)\sin \left( {m\pi{x_1}} \right)\cos \left( {n\pi {x_2}} \right)\cos \left( {p\pi {x_3}} \right)dx}
$$
and $\left\{ {\sin \left( {m\pi {x_1}} \right)\cos \left( {n\pi {x_2}} \right)\cos \left( {p\pi {x_3}} \right)} \right\}_{m\in \N, n,p\in \N \cup\{0\}}$ is an orthogonal basis on ${L^2}\left( \Omega  \right)$, there exist some $(n_0,p_0)$ such that $\psi _{{n_0},{p_0}}$ is non-constant. 

On the other hand, recall from Lemma \ref{bod1} that
\bq
\psi_{n_0,p_0}(r)=\int\limits_\Omega  {f_j(x){G(\alpha_r,x)}dx = \frac{{E_{1j}^*\left( \alpha_r  \right)}}
{{{D_1}\left( I \right)\left( \alpha_r  \right)}} + } \frac{{E_{2j}^*\left( \alpha_r  \right)}}
{{{D_2}\left( I \right)\left( \alpha_r  \right)}}, \label{dn1}
\eq
where $\alpha _r=( ir,n_0\pi ,p_0\pi)$.  Fix $\eps>0$ such that $\min\{T\sqrt{\lambda+2\mu},T\sqrt{\mu}\}>2+\eps$. Then it is straightforward to see, from the explicit formulas in Lemma 1 and the lower bound in Lemma \ref{bode2}, that 
$$|E_{ej}^*(\alpha_r)|\le C_1e^{-(1+\eps)r},|D_e(I)(\alpha_r)|\ge C_2r,~~e=1,2,$$
with $r>0$ large, where $C_1>0$ and $C_2>0$ are independent of $r$. Therefore, it follows from \eqref{dn1} that  
$$ |\psi_{n_0,p_0}(r)|\le 2C_1C_2^{-1}e^{-(1+\eps)r}r^{-1}$$
for $r>0$ large. This yields 
$$
\mathop {\limsup}\limits_{r \to \infty } \frac{{\ln \left| {\psi _{{n_0},{p_0}}\left( r \right)} \right|}}
{r} \le  - (1+\eps)$$
which is a contradiction to Lemma  \ref{beurling}. Thus ${f} \equiv 0$.

Now following the proof of Lemma \ref{bod1} up to the equations \eqref{PT6} and \eqref{PT7} ($\alpha^2:=\alpha_1^2+\alpha_2^3+\alpha_3^3$ needs not be negative at that time) we obtain  
\bq
&~& \frac{{{d^2}}}
{{d{t^2}}}\int\limits_\Omega  {\left( {{\alpha _j}{\alpha _1}{u_1}{G_1^j} + {\alpha _j}{\alpha _2}{u_2}{G_2^j} + {\alpha _j}{\alpha _3}{u_3}{G_3^j}} \right)dx}\nn \hfill \\
 &~&~+ \left( {\lambda  + 2\mu } \right)\alpha^2 \int\limits_\Omega  {\left( {{\alpha _j}{\alpha _1}{u_1}{G_1^j} + {\alpha _j}{\alpha _2}{u_2}{G_2^j} + {\alpha _j}{\alpha _3}{u_3}{G_3^j}} \right)} dx = 0, \label{cuoi1} \hfill\\
 &~& \frac{{{d^2}}}
{{d{t^2}}}\int\limits_\Omega  {\Bigl( {{\alpha ^2}{u_j}{G} - \bigl( {{\alpha _j}{\alpha _1}{u_1}{G_1^j} + {\alpha _j}{\alpha _2}{u_2}{G_2^j} + {\alpha _j}{\alpha _3}{u_3}{G_3^j}} \bigr)} \Bigr)} dx \nn\hfill \\
&~&~ + \mu \alpha^2 \int\limits_\Omega  {\Bigl( {{ {\alpha ^2}}{u_j}{G_j^j} - \bigl( {{\alpha _j}{\alpha _1}{u_1}{G_1^j} + {\alpha _j}{\alpha _2}{u_2}{G_2^j} + {\alpha _j}{\alpha _3}{u_3}{G_3^j}} \bigr)} \Bigr)} dx = 0\label{cuoi2}
\eq
for all $\alpha \in \mathbb{C}^3$. Note that $y \equiv  0$ is the unique solution to the differential equation
\[\left\{ \begin{gathered}
  y''\left( t \right) +{\eta ^2}y\left( t \right) = 0, \hfill \\
  y\left( 0 \right) = 0, \hfill \\
  y'\left( 0 \right) = 0, \hfill \\ 
\end{gathered}  \right.\]
where $\eta\ge 0$ independent of $t$. Applying this to \eqref{cuoi1} and \eqref{cuoi2} with $\alpha\in \mathbb{R}^3$ we get
$$
\int\limits_\Omega  {\left( {{\alpha _j}{\alpha _1}{u_1}{G_1^j} + {\alpha _j}{\alpha _2}{u_2}{G_2^j} + {\alpha _j}{\alpha _3}{u_3}{G_3^j}} \right)}dx = 0$$
and 
$$ \int\limits_\Omega  {\Bigl( {{ {\alpha ^2}}{u_j}{G} - \bigl( {{\alpha _j}{\alpha _1}{u_1}{G_1^j} + {\alpha _j}{\alpha _2}{u_2}{G_2^j} + {\alpha _j}{\alpha _3}{u_3}{G_3^j}} \bigr)} \Bigr)} dx = 0.$$
Adding the latter equations we obtain  
\begin{equation}\label{cuoic}
\int\limits_\Omega  {u_j(x,.)G(\alpha,x)} dx = 0 ~~\text{for all}~\alpha\in \R^3, \alpha\ne 0. 
\end{equation}

To get the same equation to (\ref{cuoic}) with $\alpha=0$, we can simply use identity \eqref{eq:Ak} with $\alpha=0$ to get 
$$\dfrac{{{d^2}}}
{{d{t^2}}}\,\int\limits_\Omega  {u_j(x,t)} dx = 0.$$
Since 
$$\left[ {\int\limits_\Omega  {u_j\left( {x,t} \right)} dx} \right]_{ t=0}= 0=\left[ {\frac{d}
{{dt}}\int\limits_\Omega  {u_j\left( {x, t } \right)dx} } \right]_{ t=0}$$ 
we have  
\begin{equation} 
\int\limits_\Omega  {u_j\left( {x,.} \right)} dx = 0. \label{cuoicc}
\end{equation} 
Putting \eqref{cuoic} and \eqref{cuoicc} together, we arrive at 
$$
\int\limits_\Omega  {u_jG} = \int\limits_\Omega  {u_j(x,.)\cos(\alpha_1 x_1)\cos(\alpha_2 x_2)\cos(\alpha_3 x_3)} dx= 0 ~~\text{for all}~\alpha\in \R^3.
$$
This implies that $u\equiv 0$ because $\{\cos(m\pi x_1)\cos(m\pi x_2)\cos(n\pi x_3)\}_{m,n,p\ge 0}$ forms an orthogonal basis of ${L^2}\left( \Omega  \right)$. 
\end{proof}

\section{Regularization}

Lemma \ref{bod1} and Lemma \ref{bode2} allow us compute $\F(f)(\alpha)$ approximately with $|\alpha|$ large. To recover $\F(f)(\alpha)$ with $|\alpha|$ small, we shall need the following interpolation inequality. This is an adaption of Lemma 4 in \cite{TDN}. The only difference is that we are working on a 3-dim problem, and hence we require more interpolation points.

\begin{lemma}[Interpolation inequality]\label{noisuy1}
Let ${B_r} = \left\{ { \pm z_j|z_j=5r + j, j = 1,2, \ldots ,24r} \right\}$ for some integer $r \geqslant 50$. Let $\omega:\C\to \C $ be an entire function such that $\left| {\omega \left( z \right)} \right| \leqslant A\,{e^{\left| z \right|}}$ for some constant $A$. Then for any function $g:{B_r} \to \C$ one has 
$$\mathop {\sup }\limits_{\left| z \right| \leqslant \pi r} \left| {\omega \left( z \right) - L\left[ {{B_r},g} \right]\left( z \right)} \right| \leqslant A\,{e^{ - r}} + 48\,r\,{e^{30r}}\mathop {\sup }\limits_{z \in {B_r}} \left| {\omega \left( z \right) - g\left( z \right)} \right|.$$
\end{lemma}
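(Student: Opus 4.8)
The plan is to estimate the interpolation error by splitting it into the part coming from the difference $\omega - g$ on the nodes and the part coming from the polynomial interpolation of the entire function $\omega$ itself. Write $L[B_r;g] = L[B_r;\omega] + L[B_r;g-\omega]$. Since $L[B_r;\cdot]$ is linear, it suffices to bound separately $\sup_{|z|\le \pi r}|\omega(z) - L[B_r;\omega](z)|$ and $\sup_{|z|\le \pi r}|L[B_r;g-\omega](z)|$. The first term is the genuine interpolation error for the entire function $\omega$ of exponential type $1$; the second is an amplification of the nodal error $\sup_{z\in B_r}|\omega(z)-g(z)|$ by the Lebesgue-type constant of the node set $B_r$ on the disk $\{|z|\le\pi r\}$.

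For the first term, I would use the Hermite–Walsh contour-integral remainder formula for Lagrange interpolation: for $|z|\le \pi r$,
\[
\omega(z) - L[B_r;\omega](z) = \frac{1}{2\pi i}\oint_{\Gamma} \frac{\Pi_r(z)}{\Pi_r(\zeta)}\,\frac{\omega(\zeta)}{\zeta - z}\,d\zeta,
\]
where $\Pi_r(w) = \prod_{z_j\in B_r}(w - z_j)$ and $\Gamma$ is a large circle $|\zeta| = \rho$ with $\rho$ chosen of order $r$ (say $\rho$ comparable to, but a definite multiple larger than, $\pi r$, e.g. $\rho = 30r$-ish so that the bound $e^{30r}$ does not appear here but rather $e^{-r}$ does — this is where the specific geometry of $B_r\subset[-29r,-5r]\cup[5r,29r]$ and the target disk of radius $\pi r$ matters). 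On $\Gamma$ one has $|\omega(\zeta)|\le A e^{\rho}$, while $|\Pi_r(z)/\Pi_r(\zeta)|$ is small because every factor $|z - z_j|$ with $|z|\le\pi r$ is bounded by a fixed multiple of $r$ whereas $|\zeta - z_j|$ is at least a fixed multiple of $r$, and crucially the product of $48r$ such ratios must be beaten down to $e^{-\rho}$-size; choosing $\rho$ appropriately (and using $48r$ factors each $\le$ some $q<1$ times something, i.e. that $z_j$ are clustered far from the small disk) gives a bound of the form $A e^{-r}$. I would calibrate $\rho$ so that the final estimate reads exactly $A e^{-r}$.

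For the second term, on $|z|\le \pi r$ each Lagrange basis polynomial satisfies
\[
\Bigl|\prod_{k\ne j}\frac{z - z_k}{z_j - z_k}\Bigr| \le \frac{\prod_{k\ne j}(|z| + |z_k|)}{\prod_{k\ne j}|z_j - z_k|} \le \frac{(\text{const}\cdot r)^{48r - 1}}{\prod_{k\ne j}|z_j - z_k|},
\]
and the denominator, being a product of differences of the integers $\pm(5r+1),\dots,\pm(5r+24r)$, is bounded below by a product of factorials; a Stirling estimate shows the whole ratio is at most $r\,e^{30r}$ up to absolute constants, and summing the $48r$ basis polynomials contributes the factor $48r$, yielding $48r\,e^{30r}\sup_{z\in B_r}|\omega(z)-g(z)|$. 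Adding the two bounds gives the claimed inequality.

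The main obstacle is the bookkeeping in the second term: one must show that the node spacings — the pairwise differences among $\{\pm(5r+j): 1\le j\le 24r\}$ — produce a denominator large enough that, after dividing the numerator $(\text{const}\cdot r)^{48r}$ by it, only $e^{30r}$ (times a polynomial in $r$) survives, and not something larger like $e^{cr\ln r}$. This is exactly why the node set is taken with $24r$ equally spaced integers on each side starting at $5r$: the resulting minimal spacing is $1$ but the clustering away from the origin at distance $\ge 5r$ makes $\prod_{k\ne j}|z_j-z_k| \ge (\text{something like }(24r)!^2/\text{poly})$, and $\ln((24r)!) \approx 24r\ln(24r) - 24r$, which combines with $48r\ln(\text{const}\cdot r)$ to leave a net linear-in-$r$ exponent. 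Verifying that $30$ is a valid constant (and that $50$ is a large enough threshold on $r$ for all the asymptotic estimates to hold with explicit constants rather than $O(\cdot)$) is the delicate quantitative part; everything else is the standard Hermite interpolation remainder machinery. Since this is stated to be an adaptation of Lemma 4 in \cite{TDN} with only the dimension count changed (more interpolation points), I would follow that argument line by line, merely tracking how replacing $r$-many points per branch by $24r$-many propagates through the constants.
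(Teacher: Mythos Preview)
Your approach is essentially the same as the paper's: the same two-term splitting, the contour-integral (Hermite) remainder for $\omega - L[B_r;\omega]$, and the Lagrange-basis amplification bound for $L[B_r;\omega-g]$. The paper fixes the contour at $|\zeta|=50r$ and controls the product $\prod_j \frac{(\pi r)^2+z_j^2}{(50r)^2-z_j^2}$ via Jensen's inequality on a concave logarithm (yielding $\le \frac{50-\pi}{50}e^{-51r}$), while for the second term it bounds $\prod_{k\ne j}\frac{z_k}{|z_j-z_k|}$ by a single quantity $J(r)$ and shows $J(r)\le e^{30r}$ by checking $J(1)\le e^{30}$ and $J(r+1)/J(r)<e^{30}$ directly---so your Stirling heuristic is replaced by an explicit induction, but otherwise your outline matches.
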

\begin{proof}
Fix $z \in \C$ with $\left| z \right| \leqslant \pi r$. We have the following residue formula at 49 simple poles $\{z\}\cup B_r$,  
$$\int\limits_{\xi = \left\{ {x \in \C:\left| x \right| = 50r} \right\}} {\frac{{\omega \left( x \right)}}
{{x - z}}\prod\limits_{j = 1}^{24r} {\frac{{{z^2} - z_j^2}}
{{{x^2} - z_j^2}}\,\,} } dx = 2\pi i\Bigl( {\omega \left( z \right) - L\left[ {{B_r},\omega } \right]\left( z \right)} \Bigr).$$
This gives the simple bound  
\bq \label{eq:simple-bound}
\left| {\omega \left( z \right) - L\left[ {{B_r},\omega } \right]\left( z \right)} \right| \leqslant 50r\mathop {\sup }\limits_{x \in \xi } \left\{ {\dfrac{{\left| {\omega \left( x \right)} \right|}}
{{\left| {x - z} \right|}}\prod\limits_{j = 1}^{24r} {\dfrac{{\left| {{z^2} - z_j^2} \right|}}
{{\left| {{x^2} - z_j^2} \right|}}} } \right\}.
\eq
Now we bound the right hand side of (\ref{eq:simple-bound}). For $x \in \C, |x|=50r$, we have $\left| {\omega \left( x \right)} \right| \leqslant A\,{e^{50r}}$, $\left| {x - z} \right| \geqslant \left( {50 - \pi } \right)r$ and
\begin{equation}\label{nsy2}\prod\limits_{j = 1}^{24r} {\frac{{\left| {{z^2} - z_j^2} \right|}}
{{\left| {{x^2} - z_j^2} \right|}}}  \leqslant \prod\limits_{j = 1}^{24r} {\frac{{{{\left| z \right|}^2} + z_j^2}}
{{{{\left| x \right|}^2} - z_j^2}} \leqslant } \prod\limits_{j = 1}^{24r} {\frac{{{{\left( {\pi r} \right)}^2} + z_j^2}}
{{{{\left( {50r} \right)}^2} - z_j^2}}.} \end{equation}
We shall show that 
\begin{equation}\label{nsy1}
\prod\limits_{j = 1}^{24r} {\frac{{{{\left( {\pi r} \right)}^2} + z_j^2}}
{{{{\left( {50r} \right)}^2} - z_j^2}}}  \leqslant \frac{{50 - \pi }}
{{50}}{e^{ - 51r}},\,\,\,\,\,\forall r \geqslant 50.
\end{equation}
Note that the function $x\mapsto v\left( x \right) = \ln \left( {\dfrac{{{{\left( {\pi r} \right)}^2} + x}}
{{{{\left( {50r} \right)}^2} - x}}} \right)$. $v\left( x \right)$ is increasing and concave in $\left[ {0,{{\left( {29r} \right)}^2}} \right]$. Using Jensen's inequality we get 
\begin{align*}
\sum\limits_{j = 1}^{24r} {v\left( {z_j^2} \right)}  &= \sum\limits_{h = 1}^6 {\left( {\sum\limits_{j = 4\left( {h - 1} \right)r + 1}^{4hr} {v\left( {z_j^2} \right)} } \right)}  \le 4r\sum\limits_{h = 1}^6 {v\left( {\frac{1}{{4r}}\sum\limits_{j = 4\left( {h - 1} \right)r + 1}^{4hr} {z_j^2} } \right)}\\ 
&= 4r\sum\limits_{h = 1}^6 {v\left( {\left( {24h + 16{h^2} + \frac{{31}}{3}} \right){r^2} + \left( {3 + 4h} \right)r + \frac{1}{6}} \right)}\\
&\le 4r\sum\limits_{h = 1}^6 {v\left( {\left( {24h + 16{h^2} + \frac{{31}}{3}} \right){r^2} + \left( {3 + 4h} \right)\frac{{{r^2}}}{{50}} + \frac{{{r^2}}}{{6 \times {{50}^2}}}} \right)} \\
&= 4r\sum\limits_{h = 1}^6 {\ln \left( {\frac{{{\pi ^2} + 24h + 16{h^2} + \frac{{31}}{3} + \frac{{3 + 4h}}{{50}} + \frac{1}{{6 \times {{50}^2}}}}}{{{{50}^2} - \left( {24h + 16{h^2} + \frac{{31}}{3} + \frac{{3 + 4h}}{{50}} + \frac{1}{{6 \times {{50}^2}}}} \right)}}} \right)} \\
&<  - 51r + \ln \left( {\frac{{50 - \pi }}{{50}}} \right).
\end{align*} 
Thus \eqref{nsy1} holds. Replacing \eqref{nsy2} and \eqref{nsy1} into (\ref{eq:simple-bound}) we obtain
\begin{equation}\label{nsy3}
\left| {\omega \left( z \right) - L\left[ {{B_r},\omega } \right]\left( z \right)} \right| \leqslant A\,{e^{ - r}},\,\,\,\forall z \in C,\,\,\left| z \right| \leqslant \pi r.
\end{equation}

Next, we observe that  
\begin{equation*}
\begin{split}
L\left[ {{B_r},\omega } \right]\left( z \right) - L\left[ {{B_r},g} \right]\left( z \right) = &\sum\limits_{j = 1}^{24r} {\left( {\prod\limits_{k \ne j} {\frac{{{z^2} - z_k^2}}
{{z_j^2 - z_k^2}}} } \right)} \frac{{z + {z_j}}}
{{2{z_j}}}\Bigl( {\omega \left( {{z_j}} \right) - g\left( {{z_j}} \right)} \Bigr)    \\
\phantom{L\left[ {{B_r},\omega } \right]\left( z \right) - L\left[ {{B_r},g} \right]\left( z \right) = }&+ \sum\limits_{j = 1}^{24r} {\left( {\prod\limits_{k \ne j} {\frac{{{z^2} - z_k^2}}
{{z_j^2 - z_k^2}}} } \right)} \frac{{z - {z_j}}}
{{ - 2{z_j}}}\Bigl( {\omega \left( { - {z_j}} \right) - g\left( { - {z_j}} \right)} \Bigr).
\end{split}
\end{equation*}
Therefore
\begin{equation}\label{nsy4}
\Bigl| {L\left[ {{B_r},\omega } \right]\left( z \right) - L\left[ {{B_r},g} \right]\left( z \right)} \Bigr| \leqslant 2\sigma \sum\limits_{j = 1}^{24r} {\left| {\prod\limits_{k \ne j} {\frac{{{z^2} - z_k^2}}
{{z_j^2 - z_k^2}}} } \right|,} 
\end{equation}
where $\sigma  = \mathop {\sup }\limits_{z \in {B_r}} \left| {\omega \left( z \right) - g\left( z \right)} \right|$. 

Now we bound the the right hand side of (\ref{nsy4}). We have  
\begin{equation*}\label{nsy5}
\begin{split}
\left| {\prod\limits_{k \ne j} {\frac{{{z^2} - z_k^2}}
{{z_j^2 - z_k^2}}} } \right| &\leqslant \prod\limits_{k \ne j} {\frac{{{{\left| z \right|}^2} + z_k^2}}
{{\left| {z_j^2 - z_k^2} \right|}}}  = \prod\limits_{k \ne j} {\frac{{{{\left| z \right|}^2} + z_k^2}}
{{\left( {{z_j} + {z_k}} \right){z_k}}} \cdot \frac{{{z_k}}}
{{\left| {{z_j} - {z_k}} \right|}}}\\
&\leqslant \prod\limits_{k \ne j} {\frac{{{z_k}}}
{{\left| {{z_j} - {z_k}} \right|}}}    \leqslant \frac{{\left( {5r + 2} \right)\left( {5r + 3} \right) \ldots \left( {29r} \right)}}
{{\left( {j - 1} \right)!\left( {24r - j} \right)!}}\\
&\leqslant \frac{{\left( {5r + 2} \right)\left( {5r + 3} \right) \ldots \left( {29r} \right)}}
{{\left( {12r - 1} \right)!\left( {12r} \right)!}}=:J(r). 
\end{split} 
\end{equation*}
Since $J\left( 1 \right) \leqslant {e^{30}}$ and, by direct expansion,  
\begin{equation*}
\begin{split}
 \frac{{J\left( {r + 1} \right)}}
{{J\left( r \right)}} &= \frac{{\left( {29r + 1} \right)\left( {29r + 2} \right) \ldots \left( {29r + 29} \right)}}
{{\left( {5r + 2} \right) \ldots \left( {5r + 6} \right) \cdot {{\left[ {\left( {12r + 1} \right) \ldots \left( {12r + 11} \right)} \right]}^2} \cdot \left( {12r} \right) \cdot \left( {12r + 12} \right)}}  \\
 &\leqslant \frac{{{{29}^{29}}}}
{{{5^5} \cdot {{12}^{24}}}} < {e^{30}}, 
\end{split}
\end{equation*} 
we conclude that $J\left( r \right) \leqslant {e^{30r}}$ for all $r\ge 1$. Thus (\ref{nsy4}) reduces to 
\bq \label{eq:second-triangle}
\Bigl| {L\left[ {{B_r},\omega } \right]\left( z \right) - L\left[ {{B_r},g} \right]\left( z \right)} \Bigr| \leqslant 48r\sigma e^{30r}.
\eq

The desired result follows from (\ref{nsy3}), (\ref{eq:second-triangle}) and  the triangle inequality.
\end{proof}

The last of our preparation to prove the regularization result is the following useful lemma on the truncated Fourier series. The proof is elementary and we refer to Lemma 5 in \cite{TMDN}  for details (in \cite{TMDN} the authors dealt with the 2-dimensional case, but the proof for 3-dimensional case is essentially the same).  

\begin{lemma} \label{le:lemma4}For each $w\in L^2(\Omega)$ and $r>0$, we define
\[
\Gamma _r (w)(x) = \sum\limits_{0\le m,n,p\le r} {\kappa (m,n,p)\F(w) (m\pi ,n\pi,p\pi)\cos (m\pi x_1)\cos (n\pi x_2)\cos(p\pi x_3)} 
\]
Then
\begin{itemize}
\item[(i)] $\Gamma_r (w) \to w$ in $L^2 (\Omega )$ as $r\to \infty$.

\item[(ii)] If $w\in H^1(\Omega)$ then $\Gamma _r (w)\to w$ in $H^1(\Omega)$ and 
\[
\left\| {\Gamma_r (w) - w} \right\|_{L^2 (\Omega )}  \leqslant \frac{1}
{{\pi \sqrt{r}}}\left\| w \right\|_{H^1 (\Omega )} .
\]

\item[(iii)] If $w\in H^2(\Omega)$ then  
\[
\left\| {\Gamma_r (w) - w} \right\|_{H^1 (\Omega )}  \leqslant \frac{4}
{{\sqrt[4] r}}\left\| w \right\|_{H^2 (\Omega )} .
\]
\end{itemize}
\end{lemma}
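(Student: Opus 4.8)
The plan is to recognize $\Gamma_r$ as the Fourier truncation associated with the orthogonal product‑cosine basis and to run a Parseval / integration‑by‑parts argument. First I would record the framework: the functions $\cos(m\pi x_1)\cos(n\pi x_2)\cos(p\pi x_3)$, $m,n,p\ge 0$, are mutually orthogonal in $L^2(\Omega)$, and the weight $\kappa(m,n,p)$ is exactly the reciprocal of their squared $L^2$‑norm, so that $\Gamma_r(w)$ is the orthogonal projection of $w$ onto the span of those with $\max\{m,n,p\}\le r$, and Parseval reads $\|w\|_{L^2}^2=\sum_{m,n,p\ge 0}\kappa(m,n,p)\,a_{mnp}^2$ with $a_{mnp}:=\F(w)(m\pi,n\pi,p\pi)$. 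Part (i) is then immediate from completeness of this basis: $\Gamma_r(w)$ is a family of orthogonal projections onto an exhausting chain of subspaces, hence converges to $w$ in $L^2(\Omega)$.

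The workhorse for (ii) and (iii) is a boundary‑term‑free first‑order identity. For $w\in H^1(\Omega)$ the family $\{\sin(m\pi x_1)\cos(n\pi x_2)\cos(p\pi x_3)\}_{m\ge 1,\,n,p\ge 0}$ is again an orthogonal basis of $L^2(\Omega)$, and testing the weak derivative $\partial_{x_1}w$ against one of these functions and integrating by parts in $x_1$ annihilates the boundary integral, \emph{with no hypothesis on $w$ along $\partial\Omega$} (the test function vanishes on the faces $x_1\in\{0,1\}$, while $\mathbf n_1=0$ on the other faces). This shows the coefficient equals $-m\pi\,a_{mnp}$, and Parseval in this second basis gives $\|\partial_{x_1}w\|_{L^2}^2=\sum\kappa(m,n,p)(m\pi)^2 a_{mnp}^2$, with the two analogous identities for $\partial_{x_2},\partial_{x_3}$. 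Writing $\theta_{mnp}:=\pi^2(m^2+n^2+p^2)$ I obtain $\|w\|_{H^1}^2=\sum\kappa(1+\theta_{mnp})a_{mnp}^2$; since $w-\Gamma_r(w)$ carries the coefficients $a_{mnp}$ on $\{\max>r\}$ and $0$ elsewhere, $\|w-\Gamma_r(w)\|_{H^1}^2=\sum_{\max\{m,n,p\}>r}\kappa(1+\theta_{mnp})a_{mnp}^2$.

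For (ii) I would note that on the tail $\max\{m,n,p\}>r$ one has $\theta_{mnp}>\pi^2 r^2$, whence
\[
\|w-\Gamma_r(w)\|_{L^2}^2=\sum_{\max>r}\kappa\,a_{mnp}^2\le \frac{1}{\pi^2 r^2}\sum_{\max>r}\kappa\,\theta_{mnp}a_{mnp}^2\le \frac{1}{\pi^2 r^2}\|w\|_{H^1}^2 ,
\]
which yields the stated estimate (in fact a stronger one) after taking square roots; the $H^1$‑convergence in (ii), like the $L^2$‑convergence in (i), follows because the coefficient series for $\|w\|_{H^1}^2$ converges, so its tail tends to $0$.

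The genuine difficulty is (iii), and the \textbf{main obstacle} is that $w\in H^2(\Omega)$ carries no Neumann condition, so the naive second‑order Parseval identity $\sum\kappa\,\theta_{mnp}^2 a_{mnp}^2=\|D^2w\|^2$ is \emph{false}: already $w(x)=x_1$ has $a_{m00}\sim m^{-2}$, making that sum diverge. I would instead split $\{\max>r\}\subseteq\{m>r\}\cup\{n>r\}\cup\{p>r\}$ and treat, say, the piece $\{m>r\}$. Two integrations by parts in $x_1$ in the definition of $a_{mnp}$ give $a_{mnp}=\tfrac{1}{m^2\pi^2}\bigl(T_{mnp}-c_{mnp}\bigr)$, where $c_{mnp}=\F(\partial_{x_1}^2w)(m\pi,n\pi,p\pi)$ is a bulk term and $T_{mnp}$ collects the two face integrals of the trace $\partial_{x_1}w|_{x_1\in\{0,1\}}$ against $\cos(n\pi x_2)\cos(p\pi x_3)$. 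The $H^1$‑weight splits accordingly: the pure $\pi^2m^2$‑part equals $\sum_{m>r}\kappa(\pi m)^{-2}(T_{mnp}-c_{mnp})^2$, whose dominant piece is $\bigl(\sum_{m>r}m^{-2}\bigr)\lesssim r^{-1}$ times $\sum_{n,p}\kappa' T_{mnp}^2$, and the latter is, by two‑dimensional Parseval on the face, comparable to $\|\partial_{x_1}w|_{\text{face}}\|_{L^2}^2$, which is finite and bounded by $\|w\|_{H^2}^2$ \emph{via the trace theorem}; the $\pi^2n^2$‑ and $\pi^2p^2$‑parts are routed through the \emph{clean} mixed identities $\sum\kappa(\pi m)^2(\pi n)^2 a_{mnp}^2=\|\partial_{x_1}\partial_{x_2}w\|^2$ (whose boundary terms genuinely vanish, unlike the pure second derivative), costing a factor $(\pi r)^{-2}$; the constant part is smaller still. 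Collecting the three symmetric pieces gives $\|w-\Gamma_r(w)\|_{H^1}^2\lesssim r^{-1}\|w\|_{H^2}^2$, the sharp rate, which a fortiori yields the stated $\frac{4}{\sqrt[4]{r}}\|w\|_{H^2}$ bound (with room to spare in the exponent). The trace term $T_{mnp}$ is precisely what limits the rate to $r^{-1/2}$ in the $H^1$‑norm; under a Neumann condition it would vanish and give $r^{-1}$, so controlling it through the trace theorem rather than forcing it to zero is the crux.
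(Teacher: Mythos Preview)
The paper does not prove this lemma in the text; it cites Lemma~5 of \cite{TMDN} (a two-dimensional analogue) and asserts that the three-dimensional case is ``essentially the same''. So there is no in-paper argument to compare against beyond that remark.

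Your argument is correct. Parts (i) and (ii) are the standard Parseval/projection computation, and your bound $\|w-\Gamma_r(w)\|_{L^2}\le (\pi r)^{-1}\|w\|_{H^1}$ is in fact sharper than the stated $(\pi\sqrt r)^{-1}$: on the tail $\max\{m,n,p\}>r$ one has $\theta_{mnp}>\pi^2 r^2$, not merely $\pi^2 r$. For (iii) you correctly isolate the real obstacle---that $w\in H^2(\Omega)$ carries no Neumann condition, so $\sum\kappa\,\theta_{mnp}^2\,a_{mnp}^2$ need not be finite (your example $w=x_1$ shows this)---and your remedy is the natural one: integrate by parts twice in the ``large'' index direction, control the resulting face integrals of $\partial_\nu w$ via the trace theorem, and route the cross-derivative contributions through the boundary-term-free mixed identities $\sum\kappa\,(m\pi)^2(n\pi)^2 a_{mnp}^2=\|\partial_{x_1}\partial_{x_2}w\|_{L^2}^2$. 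This gives the sharper rate $\|w-\Gamma_r(w)\|_{H^1}\lesssim r^{-1/2}\|w\|_{H^2}$.

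One small caveat: because the trace-theorem route does not track an explicit constant, your ``a fortiori'' recovers the stated inequality $\|w-\Gamma_r(w)\|_{H^1}\le 4r^{-1/4}\|w\|_{H^2}$ only for $r$ large. For the remaining range you can fall back on the trivial bound $\|w-\Gamma_r(w)\|_{H^1}\le\|w\|_{H^1}\le\|w\|_{H^2}$, which is legitimate since the cosine basis is also $H^1$-orthogonal and hence $\Gamma_r$ is an $H^1$-orthogonal projection; this covers all $r\le 256$. This is a cosmetic patch, not a defect in the method.
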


We are now ready to prove the main result. 
\begin{proof}[Proof of Theorem \ref{ch2chinhhoanghiem}] We shall first estimate the error $f_j^\varepsilon  - \Gamma _{{r_\varepsilon }}f_j^0$, where $\Gamma_r$ is defined in Lemma \ref{le:lemma4}. Then we compare $\Gamma _{{r_\varepsilon }}f_j^0$ and $f^j_0$ by employing Lemma \ref{le:lemma4}. The conclusion follows from the triangle inequality. In the following, $C, C_1,C_2$ always stand for constants depending only on the exact data, and saying that $\eps>0$ small enough means that $\eps\in (0,\eps_0)$ for some constant $\eps_0>0$  depending only on the exact data.
\\\\
{\bf Step 1:} Bound on $\left| {F_j^\varepsilon \left( {m,n,p} \right) - {\F}\left( {f_j^0} \right)\left( {m\pi ,n\pi ,p\pi } \right)} \right|$ for $m,n,p \in \left[ {0,{r_\varepsilon }} \right]$.

Applying Lemma \ref{noisuy1} to $g\left( z \right) = H_j^\varepsilon \left( {z,n\pi ,p\pi } \right)$ and  
$$  \omega \left( z \right) = \int\limits_\Omega  {f_j^0\left( {{x_1},{x_2},{x_3}} \right)\cos \left( { - iz{x_1}} \right)\cos \left( {n\pi {x_2}} \right)\cos \left( {p\pi {x_3}} \right)d{x_1}d{x_2}d{x_3}}  $$
we find that for $m,n,p \in \left[ {0,{r_\varepsilon }} \right]$ and $\varepsilon>0$ small enough
\bq\label{noisuy2}
 &~& \left| {F_j^\varepsilon \left( {m,n,p} \right) - {\F}\left( {f_j^0} \right)\left( {m\pi ,n\pi ,p\pi } \right)} \right| \nn\hfill\\
&=& \Bigl| {L\left[ {{B_{{r_\varepsilon }}},g} \right]\left( {im\pi } \right) - \omega \left( {im\pi } \right)} \Bigr|  \nn\hfill \\
&\leqslant & {\left\| {f_j^0} \right\|_{{L^1}\left( \Omega  \right)}}{e^{ - {r_\varepsilon }}} + 48\,{r_\varepsilon }\,{e^{30{r_\varepsilon }}}\mathop {\sup }\limits_{z \in {B_{{r_\varepsilon }}}} \left| {\omega \left( z \right) - g\left( z \right)} \right|.
\eq

The error $\left| {\omega \left( z \right) - g\left( z \right)} \right|$ can be bound by direct computation. Indeed, for  $z \in {B_{{r_\varepsilon }}}$, we have  
\bq\label{noisuy3}
  \left| {\omega \left( z \right) - g\left( z \right)} \right| &=& \left| {{\F}\left( {f_j^0} \right)\left( { - iz,n\pi ,p\pi } \right) - H_j^\varepsilon \left( {z,n\pi ,p\pi } \right)} \right| \nn \hfill\\
&\leqslant & \left| {{\F}\left( {f_j^0} \right)\left( { - iz,n\pi ,p\pi } \right) - H_j^0\left( {z,n\pi ,p\pi } \right)} \right| \nn \hfill\\
&~&~~~+ \left| {H_j^0\left( {z,n\pi ,p\pi } \right) - H_j^\varepsilon \left( {z,n\pi ,p\pi } \right)} \right| \nn \hfill\\
&\leqslant & \left| {\frac{{E_{1j}^*\left( { - iz,n\pi ,p\pi } \right)}}
{{{D_1}\left( {{I_0}} \right)\left( { - iz,n\pi ,p\pi } \right)}} + \frac{{E_{2j}^*\left( { - iz,n\pi ,p\pi } \right)}}
{{{D_2}\left( {{I_0}} \right)\left( { - iz,n\pi ,p\pi } \right)}}} \right| \nn\hfill \\
&~&~~~+ \left| {\frac{{{E_{1j}}\left( {{I_0}} \right)\left( { - iz,n\pi ,p\pi } \right)}}
{{{D_1}\left( {{I_0}} \right)\left( { - iz,n\pi ,p\pi } \right)}} - \frac{{{E_{1j}}\left( {{I_\varepsilon }} \right)\left( { - iz,n\pi ,p\pi } \right)}}
{{{D_1}\left( {{I_\varepsilon }} \right ) \left( { - iz,n\pi ,p\pi } \right)}}} \right| \nn \hfill\\
&~&~~~+\left| {\frac{{{E_{2j}}\left( {{I_0}} \right)\left( { - iz,n\pi ,p\pi } \right)}}
{{{D_2}\left( {{I_0}} \right)\left( { - iz,n\pi ,p\pi } \right)}} - \frac{{{E_{2j}}\left( {{I_\varepsilon }} \right)\left( { - iz,n\pi ,p\pi } \right)}}
{{{D_2}\left( {{I_\varepsilon }} \right)\left( { - iz,n\pi ,p\pi } \right)}}} \right|. 
\eq
It is straightforward to see from the explicit formulas of  ${E_{1j}}$, ${E_{2j}}$, $E_{1j}^*$,  $E_{2j}^*$ that
\bqq
 \left| {{E_{1j}}\left( {{I_\varepsilon }} \right)\left( { - iz,n\pi ,p\pi } \right) - {E_{1j}}\left( {{I_0}} \right)\left( { - iz,n\pi ,p\pi } \right)} \right| 
&\le& {C_1}r_\varepsilon ^3\,{e^{29{r_\varepsilon }}}\,\varepsilon, \hfill\\
  \left| {{E_{2j}}\left( {{I_\varepsilon }} \right)\left( { - iz,n\pi ,p\pi } \right) - {E_{2j}}\left( {{I_0}} \right)\left( { - iz,n\pi ,p\pi } \right)} \right| 
&\le& {C_1}r_\varepsilon ^3\,{e^{29{r_\varepsilon }}}\,\varepsilon, \hfill\\
 \left| {{E_{1j}}\left( {{I_0}} \right)\left( { - iz,n\pi ,p\pi } \right)} \right| 
&\le& {C_1}\,r_\varepsilon ^3\,{e^{29{r_\varepsilon }}},\hfill\\
\left| {{E_{2j}}\left( {{I_0}} \right)\left( { - iz,n\pi ,p\pi } \right)} \right| 
&\le& {C_1}\,r_\varepsilon ^3\,{e^{29{r_\varepsilon }}},  \\
  \left| {E_{1j}^*\left( { - iz,n\pi ,p\pi } \right)} \right| 
&\le& \frac{{{C_1}\,r_\varepsilon ^3\,{e^{29{r_\varepsilon }}}}}{{{e^{\sqrt 5 \sqrt {  \lambda  + 2\mu } \,\,T\,{r_\varepsilon }}}}},\hfill\\
  \left| {E_{2j}^*\left( { - iz,n\pi ,p\pi } \right)} \right| 
&\le& \frac{{{C_1}\,r_\varepsilon ^3\,{e^{29{r_\varepsilon }}}}}
{{{e^{\sqrt 5 \sqrt {  \mu } \,\,T\,{r_\varepsilon }}}}}.
\eqq
On the other hand, Lemma \ref{bode2} ensures that $
 C_2 r_\varepsilon \le \left| {{D_j}\left( {{I_0}} \right)\left( { - iz,n\pi ,p\pi } \right)} \right| \le {C_1}r_\varepsilon^2 $ and
 \bqq
 \left| {{D_j}\left( {{I_\varepsilon }} \right)\left( { - iz,n\pi ,p\pi } \right)} \right| &\ge& \left| {{D_j}\left( {{I_0}} \right)\left( { - iz,n\pi ,p\pi } \right)} \right|  \hfill\\
&~&- \,\,\left| {{D_j}\left( {{I_\varepsilon }} \right)\left( { - iz,n\pi ,p\pi } \right) - {D_j}\left( {{I_0}} \right)\left( { - iz,n\pi ,p\pi } \right)} \right| \hfill\\
 &\ge& {C_2}{r_\varepsilon } - {C_1}r_\varepsilon ^2\,\varepsilon, 
 \eqq
for all $z \in {B_{{r_\varepsilon }}}$ and $\eps>0$ small enough. Therefore, from \eqref{noisuy2} and \eqref{noisuy3} we get
\begin{equation*}\label{btchoa1}
\begin{gathered}
  \left| {F_j^\varepsilon \left( {m,n,p} \right) - {\F}\left( {f_j^0} \right)\left( {m\pi ,n\pi ,p\pi } \right)} \right| \leqslant {\left\| {f_j^0} \right\|_{{L^1}\left( \Omega  \right)}}\,{\varepsilon ^{\frac{1}
{{60}}}} \,\,+ \,48\, \times \hfill \\[0.2cm]\noindent
~~~~\times \left[ {\frac{{{C_1}{{\Bigl( {\frac{1}
{{30}}\ln \left( {{\varepsilon ^{ - 1}}} \right)} \Bigr)}^3}}}
{{{C_2}{\varepsilon ^{ - \frac{1}
{{60}}\left( {\sqrt 5 \sqrt {  \lambda  + 2\mu } \,\,T - 59} \right)}}}} + \frac{{{C_1}{{\Bigl( {\frac{1}
{{30}}\ln \left( {{\varepsilon ^{ - 1}}} \right)} \Bigr)}^3}}}
{{{C_2}{\varepsilon ^{ - \frac{1}
{{60}}\left( {\sqrt 5 \sqrt {  \mu } \,\,T - 59} \right)}}}} + \frac{{4C_1^2{{\Bigl( {\frac{1}
{{30}}\ln \left( {{\varepsilon ^{ - 1}}} \right)} \Bigr)}^4}{e^{59}}{\varepsilon ^{\frac{1}
{{60}}}}}}
{{{C_2}\Bigl( {{C_2} - \frac{1}
{{30}}{C_1}\ln \left( {{\varepsilon ^{ - 1}}} \right)\varepsilon } \Bigr)}}} \right]. \hfill \\ 
\end{gathered}
\end{equation*}
Moreover, the condition (W2') gives 
$$
  \dfrac{1}
{{60}} < \dfrac{1}
{{60}}\left( {\sqrt 5 \sqrt {  \lambda  + 2\mu } \,\,T - 59} \right), \;\; \dfrac{1}
{{60}} < \dfrac{1}
{{60}}\left( {\sqrt 5 \sqrt {  \mu } \,\,T - 59} \right).$$
Thus we obtain
\begin{equation}\label{btchoa2}
\left| {F_j^\varepsilon \left( {m,n,p} \right) - {\F}\left( {f_j^0} \right)\left( {m\pi ,n\pi ,p\pi } \right)} \right| \leqslant {C}{\Bigl( {\ln \left( {{\varepsilon ^{ - 1}}} \right)} \Bigr)^4}{\varepsilon ^{\frac{1}
{{60}}}},
\end{equation}
for all $\varepsilon>0$ small enough. 
\\\\
{\bf Step 2:} Conclusion. 

(i) Using the Parseval equality and the estimate \eqref{btchoa2} one gets 
\bq \label{eq:fe-Gf}
\left\| {f_j^\varepsilon  - \Gamma _{{r_\varepsilon }} f_j^0} \right\|_{L^2\left( \Omega  \right)}^2 
&=& \sum\limits_{0\, \le \,m,\,n,\,p\,\, \le \,{r_\varepsilon }} {\kappa\left( {m,n,p} \right){{\left| {F_j^\varepsilon \left( {m,n,p} \right) - {\F}\left( {f_j^0} \right)\left( {m\pi ,n\pi ,p\pi } \right)} \right|}^2}.}  \nn \hfill\\
&\le& C(\ln (\varepsilon ^{ - 1}))^{11} \varepsilon ^{\frac{1}{30}}.
\eq
Since $\left\| {f_j^\varepsilon  - \Gamma _{{r_\varepsilon }}f_j^0} \right\|_{L^2(\Omega)}\to 0$ by (\ref{eq:fe-Gf}) and $\left\| {\Gamma _{{r_\varepsilon }}f_j^0-f_j^0} \right\|_{L^2(\Omega)}\to 0$ by Lemma \ref{le:lemma4}, we obtain $ ||f_j^\varepsilon-f_j^0||_{L^2(\Omega)}\to 0$ as $\eps\to 0$ by the triangle inequality.

(ii) Assume that $f_j^0\in H^1(\Omega)$. Then by Lemma \ref{le:lemma4} one has $\left\| {\Gamma _{{r_\varepsilon }}f_j^0-f_j^0} \right\|_{H^1(\Omega)}\to 0$ and $\left\| {\Gamma _{{r_\varepsilon }}f_j^0-f_j^0} \right\|_{L^2(\Omega)}\le C(\ln (\eps^{-1}))^{-\frac{1}{2}}.$ Combining the latter estimate with (\ref{eq:fe-Gf}), we conclude using the triangle inequality that 
$${\left\| {f_j^\varepsilon  - f_j^0} \right\|_{{L^2}\left( \Omega  \right)}} \leqslant {C}{\Bigl( {\ln \left( {{\varepsilon ^{ - 1}}} \right)} \Bigr)^{ - \frac{1}{2}}}$$
for $\eps>0$ small enough.

Moreover, using (\ref{btchoa2}) and the Parseval equality for $H^1$  we have
\bq \label{eq:fe-Gf-H1}
&~&\left\| {f_j^\varepsilon  - \Gamma _{{r_\varepsilon }} f_j^0} \right\|_{H^1\left( \Omega  \right)}^2\nn \hfill\\
&=& \sum\limits_{0\, \le \,m,\,n,\,p\,\, \le \,{r_\varepsilon }} {\kappa\left( {m,n,p} \right) (1+\pi^2(m^2+n^2+p^2)){{\left| {F_j^\varepsilon \left( {m,n,p} \right) - {\F}\left( {f_j^0} \right)\left( {m\pi ,n\pi ,p\pi } \right)} \right|}^2}.}  \nn \hfill\\
&\le& C(\ln (\varepsilon ^{ - 1}))^{13} \varepsilon ^{\frac{1}{30}} \to 0.
\eq
Thus $\left\| {f_j^\varepsilon  - f_j^0} \right\|_{H^1\left( \Omega  \right)}\to 0$ by the triangle inequality (in $H^1$-norm). 

(iii) Assume that $f_j^0\in H^2(\Omega)$. Then by Lemma \ref{le:lemma4} one has $$\left\| {\Gamma _{{r_\varepsilon }}f_j^0-f_j^0} \right\|_{H^1(\Omega)}\le C(\ln (\eps^{-1}))^{-1/4}.$$ This estimate together with (\ref{eq:fe-Gf-H1}) and the triangle inequality yield
$$\left\| {f_j^\varepsilon  - f_j^0} \right\|_{H^1\left( \Omega  \right)}\le C(\ln (\eps^{-1}))^{-1/4}$$
for $\eps>0$ small enough.
\end{proof}

\section{Numerical example}

In this section, we test our regularization process in an explicit example. Choose $\mu  =   1$, $\lambda  = -1$, $T = 30$ and consider the system (\ref{1}-\ref{4}) with the exact data ${I_0}=\left( {{\varphi ^0},{X^0},{g^0},{h^0}} \right)$ given by
\begin{equation*}
\begin{split}
{\varphi _0}\left( t \right) =&23{\pi ^2}\cos \left( {\pi t} \right),~h_1^0 = h_2^0 = h_3^0 = 0,\\
X_1^0\left( {{x_1},{x_2},{x_3},t} \right) =& \cos \left( {\pi t} \right)\Bigl( - 4\pi \sin \left( {2\pi {x_2}} \right)\sin \left( {2\pi {x_3}} \right){n_1}\\
&- 2\pi \sin \left( {4\pi {x_1}} \right)\sin \left( {2\pi {x_3}} \right){n_2} - 2\pi \sin \left( {4\pi {x_1}} \right)\sin \left( {2\pi {x_2}} \right){n_3}\Bigr),\\
X_2^0\left( {{x_1},{x_2},{x_3},t} \right) =& \cos \left( {\pi t} \right)\Bigl( - 2\pi \sin \left( {4\pi {x_2}} \right)\sin \left( {2\pi {x_3}} \right){n_1}\\
&- 4\pi \sin \left( {2\pi {x_1}} \right)\sin \left( {2\pi {x_3}} \right){n_2} - 2\pi \sin \left( {2\pi {x_1}} \right)\sin \left( {4\pi {x_2}} \right){n_3}\Bigr),\\
X_3^0\left( {{x_1},{x_2},{x_3},t} \right) =& \cos \left( {\pi t} \right)\Bigl( - 2\pi \sin \left( {2\pi {x_2}} \right)\sin \left( {4\pi {x_3}} \right){n_1}\\
&- 2\pi \sin \left( {2\pi {x_1}} \right)\sin \left( {4\pi {x_3}} \right){n_2} - 4\pi \sin \left( {2\pi {x_1}} \right)\sin \left( {2\pi {x_2}} \right){n_3}\Bigr),\\
g_1^0\left( {{x_1},{x_2},{x_3}} \right) =& \sin \left( {4\pi {x_1}} \right)\sin \left( {2\pi {x_2}} \right)\sin \left( {2\pi {x_3}} \right),\\
g_2^0\left( {{x_1},{x_2},{x_3}} \right) =& \sin \left( {2\pi {x_1}} \right)\sin \left( {4\pi {x_2}} \right)\sin \left( {2\pi {x_3}} \right),\\
g_3^0\left( {{x_1},{x_2},{x_3}} \right) =& \sin \left( {2\pi {x_1}} \right)\sin \left( {2\pi {x_2}} \right)\sin \left( {4\pi {x_3}} \right),\\
\end{split}
\end{equation*}

Note that in this example the conditions (W1) and (W2') are satisfied. The exact solution of the system (\ref{1}-\ref{4}) can be computed explicitly  
\begin{equation*}
\begin{split}
{u_1^0}\left( {{x_1},{x_2},{x_3},t} \right) =& \cos \left( {\pi t} \right)\sin \left( {4\pi {x_1}} \right)\sin \left( {2\pi {x_2}} \right)\sin \left( {2\pi {x_3}} \right),\\
{u_2^0}\left( {{x_1},{x_2},{x_3},t} \right) =& \cos \left( {\pi t} \right)\sin \left( {2\pi {x_1}} \right)\sin \left( {4\pi {x_2}} \right)\sin \left( {2\pi {x_3}} \right),\\
{u_3^0}\left( {{x_1},{x_2},{x_3},t} \right) =& \cos \left( {\pi t} \right)\sin \left( {2\pi {x_1}} \right)\sin \left( {2\pi {x_2}} \right)\sin \left( {4\pi {x_3}} \right),\\
{f_1^0}\left( {{x_1},{x_2},{x_3}} \right) =& \sin \left( {4\pi {x_1}} \right)\sin \left( {2\pi {x_2}} \right)\sin \left( {2\pi {x_3}} \right),\\
{f_2^0}\left( {{x_1},{x_2},{x_3}} \right) =& \sin \left( {2\pi {x_1}} \right)\sin \left( {4\pi {x_2}} \right)\sin \left( {2\pi {x_3}} \right),\\
{f_3^0}\left( {{x_1},{x_2},{x_3}} \right) =& \sin \left( {2\pi {x_1}} \right)\sin \left( {2\pi {x_2}} \right)\sin \left( {4\pi {x_3}} \right).
\end{split}
\end{equation*}

Now, we consider the disturbed data, for $n\in \N$, $j \in \left\{ {1,2,3} \right\}$, 
\begin{equation*}
\begin{split}
{\varphi _n} &= \varphi ,~h_j^n = 0,\\
X_j^n &= X_j^0 + \dfrac{{ - 2\pi \cos \left( {\pi t} \right)}}{{\sqrt n }}\Bigl(\sin \left( {2n\pi {x_2}} \right)\sin \left( {2n\pi {x_3}} \right){\text{\bf n}_1} + \\
&~~~+ \sin \left( {2n\pi {x_1}} \right)\sin \left( {2n\pi {x_3}} \right){\text{\bf n}_2} + \sin \left( {2n\pi {x_1}} \right)\sin \left( {2n\pi {x_2}} \right){\text{\bf n}_3}\Bigr),\\
g_j^n &= g_j^0 + \dfrac{{\sin \left( {2n\pi {x_1}} \right)\sin \left( {2n\pi {x_2}} \right)\sin \left( {2n\pi {x_3}} \right)}}{{{n^{\frac{3}{2}}}}}.
\end{split}
\end{equation*}
The disturbed solution of the system (\ref{1}-\ref{4}) with the disturbed data  is 
\bqq
u_j^n &=& u_j^0 + \dfrac{{\cos \left( {\pi t} \right)\sin \left( {2n\pi {x_1}} \right)\sin \left( {2n\pi {x_2}} \right)\sin \left( {2n\pi {x_3}} \right)}}{{\sqrt {{n^3}} }},\hfill\\
f_j^n &=& f_j^0 + \dfrac{{\left( { - 1 + 12{n^2}} \right)\sin \left( {2n\pi {x_1}} \right)\sin \left( {2n\pi {x_2}} \right)\sin \left( {2n\pi {x_3}} \right)}}{{23\sqrt {{n^3}} }}.
\eqq

We can see that the error of the data is small 
$$
{\left\| {g_j^n - g_j^0} \right\|_{{L^2}\left( \Omega  \right)}} = \frac{{\sqrt 2 }}{{4\sqrt {{n^3}} }},~{\left\| {X_j^n - X_j^0} \right\|_{{L^1}\left( {0,T,{L^1}\left( {\partial \Omega } \right)} \right)}} \le \frac{{360\pi }}{{\sqrt n }}
$$
(in fact, they even converge in the uniform norm). However, the error of the solution is large since 
$${\left\| {f_j^n - f_j^0} \right\|_{{L^2}\left( \Omega  \right)}} = \frac{{\sqrt 2 }}{4}\sqrt {\frac{{144{n^4} - 24{n^2} + 1}}{{529{n^3}}}}  \to \infty $$
as $n \to \infty $. Thus the problem is ill-posed and a regularization is necessary.

Now, we apply our regularization procedure for $\varepsilon  =0.01$ and the disturbed data with $n=10$. The resulting regularized solution is 
\begin{equation*}
\begin{split}
f_1^\varepsilon \left( {{x_1},{x_2},{x_3}} \right) \approx&  \;0.035 - 0.063\cos \left( {\pi {x_1}} \right) - 0.156\cos \left( {\pi {x_2}} \right) - 0.455\cos \left( {\pi {x_3}} \right)\\
 &+\, 0.033\cos \left( {\pi {x_1}} \right)\cos \left( {\pi {x_2}} \right) + 0.027\cos \left( {\pi {x_1}} \right)\cos \left( {\pi {x_3}} \right) \\
 &+ 0.15\cos \left( {\pi {x_2}} \right)\cos \left( {\pi {x_3}} \right) -  0.005\cos \left( {\pi {x_1}} \right)\cos \left( {\pi {x_2}} \right)\cos \left( {\pi {x_3}} \right).
\end{split}
\end{equation*}
The error between the regularized solution and the exact solution is 
$$\left\| {f_1^\varepsilon  - f_1^0} \right\|_{{L^2}\left( \Omega  \right)}^2 \approx 0.273.$$
To see the effect of our regularization, note that the corresponding disturbed solution (with data error $\eps=0.01$) causes an extremely large error $\left\| {f_j^{10} - f_j^0} \right\|_{{L^2}\left( \Omega  \right)}^2 \approx 3.4 \times {10^5}$.
\\\\
{\bf Acknowledgments.} The work was done when P.T.Thuc was a Master student in Ho Chi Minh City University of Education.


Dang Duc Trong, Faculty of Mathematics, Vietnam National University, Ho Chi Minh City, Vietnam. {E-mail:} ddtrong@math.hcmus.edu.vn
\vspace{10pt}

Phan Thanh Nam, Department of Mathematical Sciences, University of Copenhagen, Denmark. {E-mail:} ptnam@math.ku.dk 

\vspace{10pt}

Phung Trong Thuc, Department of Mathematics, Van Lang University, Ho Chi Minh City, Vietnam. {E-mail:} phungtrongthuc@vanlanguni.edu.vn

\end{document}